
\documentclass[12pt]{article}


\usepackage{graphicx}
\usepackage{amsmath,amsthm,amssymb,color}
\usepackage[noadjust,sort]{cite}
\usepackage[mathscr]{euscript}
 \usepackage[normalem]{ulem}

\setlength{\textwidth}{16cm}
\setlength{\textheight}{22cm}
\setlength{\topmargin}{0pt}
\setlength{\headsep}{0pt}
\setlength{\headheight}{0pt}
\setlength{\oddsidemargin}{0pt}
\setlength{\evensidemargin}{0pt}
\setlength{\parskip}{0.5ex}
\normalsize

\theoremstyle{plain}
\newtheorem{Theorem}{Theorem}[section] %
\newtheorem{Lemma}[Theorem]{Lemma}

\newtheorem{Corollary}[Theorem]{Corollary}

\theoremstyle{definition}
\newtheorem{Remark}[Theorem]{Remark}

\def\calF{\mathcal{F}}

\def\calL{\mathcal{L}}
\def\calH{\mathcal{H}}

\def\leq{\leqslant}
\def\geq{\geqslant}

\theoremstyle{definition}

\newtheorem{Problem}{Problem}[section]

{\par\noindent{\it Proof of}} 
{\hfill$\vspace{5mm}\scriptstyle\blacksquare$} 

\numberwithin{equation}{section} 
\numberwithin{figure}{section} 
\numberwithin{table}{section} 

\def\supp{\operatorname{supp}}

\def\R{\mathcal{R}}

\def\Reals{{\mathbb{R}}}

\def\Naturals{{\mathbb{N}}}

\def\st{\,:\,}

\def\dfrac#1#2{\lower0.15ex\hbox{\large$\textstyle\frac{#1}{#2}$}}

\begin{document}

\setcounter{page}{1}

\markboth{M. Isaev, R.G. Novikov}{
Reconstruction
from the Fourier transform on the ball via PSWFs}

\title{
Reconstruction
from the Fourier transform  on the ball via prolate spheroidal wave
functions
 \thanks{The first author's research is  supported by    the Australian Research  Council  Discovery Early Career Researcher Award DE200101045. 
}
}
\date{}
\author{ 
Mikhail Isaev\\
\small School of Mathematics\\[-0.8ex]
\small Monash University\\[-0.8ex]
\small Clayton, VIC, Australia\\
\small\texttt{mikhail.isaev@monash.edu}
\and
Roman G. Novikov\\
\small CMAP, CNRS, Ecole Polytechnique\\[-0.8ex]
\small Institut Polytechnique de Paris\\[-0.8ex]
\small Palaiseau, France\\
\small IEPT RAS, Moscow, Russia\\
\small\texttt{novikov@cmap.polytechnique.fr}
}

\maketitle

\begin{abstract}
	We give new formulas for finding a   compactly supported function $v$ on $\Reals^d$, $d\geq  1$,  
	  from its   Fourier transform  $\calF v$ given within the ball $B_r$. For the one-dimensional case, these formulas are
	 based on the theory of prolate spheroidal wave
functions (PSWF's).  In multidimensions,   well-known results of the Radon transform theory  reduce the problem to the one-dimensional case.  
Related results on stability and  convergence rates are also given. 

\noindent \\
{\bf Keywords:}    ill-posed inverse problems, band-limited Fourier transform, 
   prolate spheroidal wave
functions,  Radon transform, H\"{o}lder-logarithmic stability.
\\\noindent 
\textbf{AMS subject classification:} 42A38, 35R30, 49K40
\end{abstract}

\section{Introduction}\label{S:intro}

Following D. Slepian, H. Landau,  and H. Pollak (see, for example, the survey paper \cite{Slepian1983}),  we consider the compact integral operator $\calF_c$ on $\calL^2([-1,1])$ defined by
 \begin{equation}\label{def:Fc}
 	\calF_c[f] (x) := \int_{-1}^1 e^{i c xy} f(y)dy,
 \end{equation}
 where $f$ is a test function and the parameter $c>0$ is the bandwidth.   
 Let $\Naturals:=\{0,1\ldots\}$.
The eigenfunctions 
 $(\psi_{j,c})_{j \in \Naturals}$    of $\calF_c$ 
 are   \emph{prolate spheroidal wave
functions} (PSWFs).  These functions are real-valued and  form an orthonormal basis in $\calL^2([-1,1])$. Let $(\mu_{j, c})_{j \in \Naturals}$ denote the corresponding eigenvalues. It is known that  all  these eigenvalues are  simple and non-zero,  so we can assume that  
$0<|\mu_{j+1,c}| < |\mu_{j,c}|$ for all $j \in \Naturals$.

 The properties of  $(\psi_{j,c})_{j \in \Naturals}$ and $(\mu_{j, c})_{j \in \Naturals}$ are recalled in Section \ref{S:prolate} of this paper.  In particular, we have that 
\begin{equation}\label{Fc-dec} 
	\calF_c [f] (x) = \sum_{j \in \Naturals} \mu_{j,c}\psi_{j,c}(x) \int_{-1}^1 \psi_{j,c} (y) f(y) dy,
	\end{equation}
	and, for $g = \calF_c [f]$,
	\begin{equation}\label{f:inverse}
	  \calF_{c}^{-1} [g](y) =    \sum_{j \in \Naturals} \dfrac{1}{\mu_{j,c}}\psi_{j,c}(y) \int_{-1}^1 \psi_{j,c} (x) g(x)dx,
\end{equation}
where  
$\calF_{c}^{-1}$ is the inverse operator, that is  $\calF_{c}^{-1} [\calF_c [f]] \equiv f $ for all $f \in  \calL^2[-1,1]$.

   The operator $\calF_c$  appears naturally in the theory of the classical Fourier transform
    $\calF$  defined in the multidimensional case $d\geq 1$ by
 \begin{equation}
 \label{eq:Fourier}
 	\calF[v] (p) := 
 	\dfrac{1}{(2\pi)^d}\int\limits_{\mathbb{R}^d} e^{i pq } v(q) dq, \qquad  p\in \mathbb{R}^d,
  \end{equation}
  where   $v$ is a complex-valued test function on $\Reals^d$.  To avoid any possible confusion with $\calF_c$, we employ the simplified notation $\hat{v} := \calF[v]$  throughout the paper. 
  Let
 \begin{equation}
  B_{\rho} := \left\{q\in \mathbb{R}^d :  |q| < \rho \right\}, \qquad
 	\text{for any  $\rho>0$.} \nonumber
 \end{equation}
  We consider  the following inverse problem. 
 \begin{Problem}\label{Problem}
Let   $d\geq 1$ and  $ r,\sigma>0$.
 Find    $v \in \calL^2(\Reals^d)$
 from   $\hat{v}$  given on the ball  $B_r$ (possibly with some noise), under a priori assumption that $v$ is   supported in $B_{\sigma}$.
 \end{Problem}

 Problem \ref{Problem} is a classical problem of the Fourier analysis, inverse scattering, and   image processing;
see, for example,   \cite{LRC1987, AMS2009, BM2009, Papoulis1975, CF2014, Gerchberg1974, IN2020, IN2020+}  and references therein. 
In the present work, we suggest a new approach to Problem \ref{Problem},  proceeding from the singular value decomposition formulas \eqref{def:Fc}, \eqref{Fc-dec} and further results
of the PSWF theory. 
Surprisingly, to our knowledge,  the  PSWF theory was omitted in the context of  Problem \ref{Problem} in the literature even though it is quite natural. 
In particular, in dimension $d=1$,  Problem \ref{Problem}  reduces to finding a function $f\in \calL^2([-1,1])$ from  $\calF_c[f]$ (possibly with some noise). 

%

%
%
%
%
%
%
%
%
%
%
%
%
%
%
%
%
%
  
In multidimensions, in addition to the PSWF theory,  we use   inversion methods  for  the classical Radon transform $\mathcal{R}$ ; see, for example   \cite{Naterrer2001, Radon}.
Recall that  $\mathcal{R}$ is  defined by 
 \begin{equation}\label{def:R}
 	\mathcal{R} [v]  (y, \theta) := \int_{q\in \Reals^d \st q \theta =y } v(q) dq, \qquad y\in \Reals,\ \theta \in \mathbb{S}^{d-1},
 \end{equation}
 where   $v$ is a complex-valued  test function on $\Reals^d$, $d \geq 1$.   In the present work,  for simplicity,  we define the inverse Radon transform $ \R^{-1}$  via the projection theorem; see formula \eqref{inv:radon} for details.  
%



 \begin{Theorem}\label{T1}Let $d\geq 1$,  $r,\sigma>0$ and $c = r\sigma$.
  Let $v\in \calL^2(\Reals^d)$ and $\supp v \subset  B_{\sigma}$. 
  Then,   its Fourier transform $\hat{v}$ restricted to $B_r$ determines $v$ via the following formulas:
  \begin{align*}
  v(q) &=     \R^{-1} [f_{r, \sigma}]( \sigma^{-1}q), \qquad  q\in \Reals^d,\\
  f_{r,\sigma}(y,\theta) &:=
 	 \begin{cases}
 \calF_c^{-1}[g_{r,\theta}](y), &\text{if } y\in [-1,1]\\
 0, & \text{otherwise},  
 \end{cases} 
 \\
 g_{r,\theta} (x)&:=      \left(\dfrac{2\pi}{\sigma}\right)^d \hat{v} (r x\theta), \qquad  x\in [-1,1],\ \theta \in \mathbb{S}^{d-1},
  \end{align*}
 where     $\calF_c^{-1}$ is     defined by \eqref{f:inverse}
 and   $ \R^{-1}$ is the inverse Radon transform.
 \end{Theorem}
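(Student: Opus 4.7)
The plan is to combine three ingredients: the Fourier (projection) slice theorem, a rescaling from $B_\sigma$ to the unit ball $B_1$, and the PSWF inversion formula \eqref{f:inverse} for the band-limited operator $\calF_c$. The key observation is that for each direction $\theta \in \mathbb{S}^{d-1}$, the restriction of $\hat v$ to the diameter $\{s\theta : s\in[-r,r]\}$ is, up to an affine change of variables, the image under $\calF_c$ with bandwidth $c = r\sigma$ of the one-dimensional function $t \mapsto \R[v](\sigma t,\theta)$. This function vanishes outside $[-1,1]$ because $\supp v \subset B_\sigma$, so each slice of $\hat v$ on $B_r$ carries exactly the information needed to recover the corresponding Radon projection via $\calF_c^{-1}$; the function $v$ is then obtained by inverting $\R$.

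Concretely, I would start from the Fourier slice identity, which in the normalization \eqref{eq:Fourier} reads
\begin{equation*}
\hat v(s\theta) \;=\; \frac{1}{(2\pi)^d}\int_{\Reals} e^{isy}\,\R[v](y,\theta)\,dy, \qquad s\in\Reals,\ \theta\in\mathbb{S}^{d-1}.
\end{equation*}
Since $\R[v](\cdot,\theta)$ is supported in $[-\sigma,\sigma]$, the substitution $y=\sigma t$ reduces the integral to one over $[-1,1]$. Setting $s = rx$ with $x\in[-1,1]$ and comparing with \eqref{def:Fc} at bandwidth $c = r\sigma$ yields
\begin{equation*}
g_{r,\theta}(x) \;=\; \sigma^{1-d}\,\calF_c\bigl[\R[v](\sigma\,\cdot\,,\theta)\bigr](x), \qquad x\in[-1,1].
\end{equation*}
Applying \eqref{f:inverse} then gives $\R[v](\sigma y,\theta) = \sigma^{d-1} f_{r,\sigma}(y,\theta)$ for $y\in[-1,1]$, with both sides vanishing elsewhere. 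Using the elementary scaling law $\R[v(\sigma\,\cdot\,)](y,\theta) = \sigma^{1-d}\,\R[v](\sigma y,\theta)$, I would recognize $f_{r,\sigma}(\cdot,\theta)$ as the Radon transform of the rescaled function $w(q):=v(\sigma q)$, which is supported in $B_1$. Inverting $\R$ recovers $w$, and the substitution $q\mapsto \sigma^{-1}q$ produces the claimed formula for $v$. The case $d=1$ is a special instance in which $\R$ reduces essentially to the identity, so the chain collapses to a direct application of $\calF_c^{-1}$ to a rescaled restriction of $\hat v$.

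The main difficulty is purely bookkeeping: one must correctly account for the Jacobian $\sigma$ from the substitution in $y$, the factor $\sigma^{1-d}$ coming from the $(d-1)$-dimensional hyperplane measure built into $\R$, and the $\sigma^d$ from the $d$-dimensional Fourier rescaling, and verify that these powers combine to match the normalization $(2\pi/\sigma)^d$ built into $g_{r,\theta}$. A secondary point is the justification of the pointwise and almost-everywhere identities in $\calL^2$, which follows from the fact that $\R$ is well defined on compactly supported $\calL^2$ functions and that the series in \eqref{f:inverse} converges in $\calL^2([-1,1])$. Beyond these checks, no input deeper than the slice theorem, \eqref{f:inverse}, and the Radon inversion formula \eqref{inv:radon} is required.
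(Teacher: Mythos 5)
Your argument is correct and follows essentially the same route as the paper's proof: the projection theorem combined with the support condition, the change of variables $s=rx$, $t=\sigma y$ giving $g_{r,\theta}=\sigma^{1-d}\calF_c[\R[v](\sigma\,\cdot\,,\theta)]$, the identification $\R[v_\sigma]=f_{r,\sigma}$ via the scaling law for $\R$, and finally Radon inversion. The powers of $\sigma$ you track all match the paper's computation, so there is nothing to add.
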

\begin{Remark}\label{rem1}
	For $d=1$, the formulas of Theorem \ref{T1} reduce to 
	\begin{align*}
		v(q)  =    \calF_c^{-1}[g_{r}]( \sigma^{-1}q), 
		\qquad 
		g_r(x) := \dfrac{2\pi}{\sigma}\hat{v}(rx), 
	\end{align*}
	where $q\in (-\sigma,\sigma)$ and $x\in [-1,1]$.
\end{Remark} 
 We  prove Theorem \ref{T1} in Section \ref{S:T1}.

  Unfortunately, the reconstruction procedure given in Theorem \ref{T1} 
  and Remark \ref{rem1}   is severely unstable. The reason is that   the numbers  $(\mu_{j, c})_{j \in \Naturals}$  decay superexponentially as $j \rightarrow \infty$; see formulas \eqref{eq:eigenrel} and \eqref{eigenestimate}. To overcome this difficulty, we approximate 
 $ \calF_c^{-1}$ by the operator  $\calF_{n,c}^{-1}$ defined by
 \begin{equation}\label{def:Fnc}   
 	\calF_{n,c}^{-1} [w] (y) :=  \sum_{j=0}^n \dfrac{1}{\mu_{j,c}}\psi_{j,c}(y) \int_{-1}^1 \psi_{j,c} (x) w(x)dx. 
 \end{equation}
 Note that  \eqref{def:Fnc} correctly defines the operator   $\calF_{n,c}^{-1}$   on $\calL^2([-1,1])$ for any $n \in \Naturals$.   
 Let 
 \begin{equation}\label{def:pi_n}
 	\pi_{n,c}[f]:=  \sum_{j =0 }^n  \hat{f}_{j,c}  \psi_{j,c},  
\qquad
 	\hat{f}_{j,c} := \int_{-1}^1 \psi_{j,c} (y) f(y) dy.
 \end{equation}
 That is,  $\pi_{n,c}[\cdot]$ is the orthogonal projection  in $\calL^2([-1,1])$  onto the span of   the first $n+1$ functions
  $(\psi_{j,c})_{j  \leq  n}$.

  \begin{Lemma}\label{L:general}
    Let $f,w \in \calL^2([-1,1])$  and 
   $
    	\| \calF_c [f] - w\|_{\calL^2} \leq \delta  
   $ for some $\delta\geq 0$. Then, for any $n \in \Naturals$,
%
%
 \begin{equation}\label{eq:general}
 	\|f - \calF_{n,c}^{-1} [w]\|_{\calL^2([-1,1])}
 	\leq  \dfrac{\delta }{|\mu_{n,c}|}   + \|f - \pi_{n,c} [f]\|_{\calL^2([-1,1])}.
 \end{equation}
 \end{Lemma}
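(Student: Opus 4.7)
The plan is to bound $\|f - \calF_{n,c}^{-1}[w]\|_{\calL^2([-1,1])}$ by splitting it via the triangle inequality, inserting the idealized reconstruction $\calF_{n,c}^{-1}[\calF_c[f]]$:
\begin{equation*}
\|f - \calF_{n,c}^{-1}[w]\|_{\calL^2([-1,1])} \leq \|f - \calF_{n,c}^{-1}[\calF_c[f]]\|_{\calL^2([-1,1])} + \|\calF_{n,c}^{-1}[\calF_c[f] - w]\|_{\calL^2([-1,1])}.
\end{equation*}
The proof then amounts to identifying each piece with the corresponding term on the right-hand side of \eqref{eq:general}.

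First I would show that $\calF_{n,c}^{-1}[\calF_c[f]] = \pi_{n,c}[f]$. Starting from \eqref{Fc-dec}, the inner product $\int_{-1}^1 \psi_{j,c}(x)\calF_c[f](x)\,dx$ collapses, by orthonormality of $(\psi_{j,c})_{j\in\Naturals}$, to $\mu_{j,c}\hat{f}_{j,c}$. Substituting into the definition \eqref{def:Fnc} of $\calF_{n,c}^{-1}$ produces $\sum_{j=0}^{n}\hat{f}_{j,c}\psi_{j,c}$, which is exactly $\pi_{n,c}[f]$ by \eqref{def:pi_n}. Hence the first term in the split equals $\|f - \pi_{n,c}[f]\|_{\calL^2([-1,1])}$.

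For the second term, set $u := \calF_c[f] - w$, so $\|u\|_{\calL^2([-1,1])} \leq \delta$. By definition \eqref{def:Fnc} and Parseval applied to the orthonormal basis $(\psi_{j,c})$,
\begin{equation*}
\|\calF_{n,c}^{-1}[u]\|_{\calL^2([-1,1])}^2 = \sum_{j=0}^{n} \frac{|\hat{u}_{j,c}|^2}{|\mu_{j,c}|^2} \leq \frac{1}{|\mu_{n,c}|^2}\sum_{j=0}^{n}|\hat{u}_{j,c}|^2 \leq \frac{\|u\|_{\calL^2([-1,1])}^2}{|\mu_{n,c}|^2} \leq \frac{\delta^2}{|\mu_{n,c}|^2},
\end{equation*}
where the first inequality uses the monotonicity $|\mu_{j,c}|\geq|\mu_{n,c}|$ for $j\leq n$ guaranteed by the hypothesis on the eigenvalues stated in the introduction. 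Taking the square root gives the bound $\delta/|\mu_{n,c}|$.

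There is no serious obstacle here: the lemma is essentially a clean bookkeeping exercise once one recognizes that $\calF_{n,c}^{-1}\circ\calF_c$ acts as the projector $\pi_{n,c}$ on the finite-dimensional span. The only point that needs care is ensuring the inverse-eigenvalue factor is controlled by its worst value $1/|\mu_{n,c}|$, which relies crucially on the strict monotonicity of $|\mu_{j,c}|$; this is why the truncation at level $n$ produces a clean, uniform stability constant.
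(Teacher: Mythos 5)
Your proof is correct and follows essentially the same route as the paper: both identify $\calF_{n,c}^{-1}[\calF_c[f]]=\pi_{n,c}[f]$, split via the triangle inequality, and bound the error term by combining orthonormality of $(\psi_{j,c})$ (Bessel/Parseval) with the monotonicity $|\mu_{j,c}|\geq|\mu_{n,c}|$ for $j\leq n$.
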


Estimates of the type \eqref{eq:general} are of general nature for operators admitting a singular value decomposition like \eqref{Fc-dec}. For completeness of  the presentation, 
we prove Lemma~\ref{L:general} in Section~\ref{S:Lemma}. 
 Combining Theorem \ref{T1},  Remark \ref{rem1},  Lemma \ref{L:general},   inversion methods for the Radon transform $\mathcal{R}$, and known estimates of the PSWF theory for $|\mu_{n,c}|$ 
 and  $\|f - \pi_{n,c} [f]\|_{\calL^2}$ (see Section \ref{S:prolate})
  yields numerical methods for  Problem \ref{Problem}.  
  In this connection, in the present work we give a regularised version of the reconstruction procedure of Theorem \ref{T1}; see Theorem  \ref{T:multi}, Theorem \ref{T:detailed}
 and Corollary \ref{C:main}.

  For  $\alpha,\delta \in(0,1)$, let  
  \begin{equation}\label{def:n-star}
   n^*  = n^*(c, \alpha,\delta) =   \left\lfloor 3+ \tau \dfrac{ec}{4}\right\rfloor, 
 \end{equation}
   where $\lfloor\cdot \rfloor$ denotes the floor function and
   $\tau = \tau(c,\alpha,\delta) \geq 1$ is the solution of  the equation
   \begin{equation}\label{eq-tau}
   \tau \log \tau =    \dfrac{4}{ec}  \alpha  \log  (\delta^{-1}).
  \end{equation}

 Let 
 \begin{equation}\label{def:L2r}
 \begin{aligned}
 \calL^2_r
 	&:= \{w \in \calL^2 (B_r) \st  \|w\|_r <  \infty\},\\
 	\|w\|_{r}&:=  \left( \int_{B_r} p^{1-d}|w(p)|^2  dp  \right)^{1/2}.
 \end{aligned}
  \end{equation}
  
 \begin{Theorem}\label{T:multi}
 	Let the assumptions of Theorem \ref{T1} hold and $v \in \calH^\nu(\Reals^d)$ for some  $\nu \geq 0$  (and $\nu >0$ for $d=1$). Suppose that $w \in \calL^2_r $ and  $\|w  -  \hat{v}\|_{r} \leq \delta N$ for some $\delta \in (0,1)$. Let  $\alpha \in (0,1)$ and $n^*$ be defined  by \eqref{def:n-star}. Let  
 		\begin{align*}
 			 v^{\delta} (q) &:=  \mathcal{R}^{-1}\left[u_{r,\sigma}\right] (\sigma^{-1}q), \qquad  q \in \Reals^d,\\
      		u_{r,\sigma}(y,\theta)&:= 	 \begin{cases}
 \calF^{-1}_{n^*,c}[w_{r,\theta} ](y), &\text{if } y\in [-1,1],\\
 0, & \text{otherwise},  
 \end{cases} \\
      		w_{r,\theta}(x)&:= \left(\dfrac{2\pi}{\sigma}\right)^d  w(r x\theta), \qquad  x\in [-1,1],\ \theta \in \mathbb{S}^{d-1}.
 	\end{align*} 
 	Then, for any    $\beta \in (0, 1-\alpha)$ and any $\mu \in (0,\nu+\frac{d-1}{2})$, 
      \begin{equation}\label{eq:multi}
      		 \|v - v^{\delta}\|_{\calH^{-(d-1)/2}(\Reals^d)}
      		  \leq \kappa_1  N  \delta^{\beta} +  \kappa_2   \|v\|_{\calH^\nu(\Reals^d)}  \left( \log \delta^{-1}\right)^{-\mu}, 
      \end{equation}
      where  $\kappa_1 = \kappa_1(c, d, r, \sigma,\alpha,\beta)>0$ and $\kappa_2 = \kappa_2(c, d, r,\sigma,\alpha,\nu,\mu)>0$.
 \end{Theorem}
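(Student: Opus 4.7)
The plan is to reduce the multidimensional statement to one-dimensional PSWF inversion problems via the Radon transform, apply Lemma \ref{L:general} direction by direction, and balance the resulting noise and truncation terms through the choice of $n^*$ in \eqref{def:n-star}--\eqref{eq-tau}. First I would convert the $\calH^{-(d-1)/2}$ norm on $\Reals^d$ into an $L^2$-type norm over the Radon data using the classical Radon--Sobolev estimate (see \cite{Naterrer2001})
\begin{equation*}
\|v-v^\delta\|_{\calH^{-(d-1)/2}(\Reals^d)}^2 \leq C_d\int_{\mathbb{S}^{d-1}}\|\mathcal{R}(v-v^\delta)(\cdot,\theta)\|_{\calL^2(\Reals)}^2\,d\theta,
\end{equation*}
and, since by Theorem \ref{T1} the function $f_{r,\sigma}(\cdot,\theta)$ coincides with the rescaled Radon transform $\sigma^{1-d}\mathcal{R}[v](\sigma\,\cdot,\theta)$ on $[-1,1]$ (and similarly for $u_{r,\sigma}$ and $v^\delta$), the change of variable $y\mapsto\sigma y$ yields
\begin{equation*}
\|v-v^\delta\|_{\calH^{-(d-1)/2}(\Reals^d)}^2 \leq C_{d,\sigma}\int_{\mathbb{S}^{d-1}}\|f_{r,\sigma}(\cdot,\theta)-u_{r,\sigma}(\cdot,\theta)\|_{\calL^2([-1,1])}^2\,d\theta.
\end{equation*}

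For each fixed $\theta\in\mathbb{S}^{d-1}$, Lemma \ref{L:general} applied with $f=f_{r,\sigma}(\cdot,\theta)$, $w=w_{r,\theta}$, and $n=n^*$ gives
\begin{equation*}
\|f_{r,\sigma}(\cdot,\theta)-u_{r,\sigma}(\cdot,\theta)\|_{\calL^2([-1,1])} \leq \frac{\|g_{r,\theta}-w_{r,\theta}\|_{\calL^2([-1,1])}}{|\mu_{n^*,c}|}+\|f_{r,\sigma}(\cdot,\theta)-\pi_{n^*,c}[f_{r,\sigma}(\cdot,\theta)]\|_{\calL^2([-1,1])}.
\end{equation*}
After squaring and integrating in $\theta$, the first contribution becomes $|\mu_{n^*,c}|^{-2}\int_{\mathbb{S}^{d-1}}\|g_{r,\theta}-w_{r,\theta}\|_{\calL^2([-1,1])}^2\,d\theta$, which by the polar change of variables $p=r x\theta$ equals a constant multiple of $|\mu_{n^*,c}|^{-2}\|w-\hat v\|_r^2$: the weight $p^{1-d}$ in \eqref{def:L2r} exactly cancels the Jacobian $p^{d-1}$, and the $(2\pi/\sigma)^{2d}$ factors are absorbed into the constant.

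For the noise contribution I would invoke the superexponential decay of $|\mu_{n,c}|$ to be recalled in Section \ref{S:prolate}: the choice \eqref{def:n-star}--\eqref{eq-tau} of $n^*$ is precisely engineered so that $|\mu_{n^*,c}|^{-1}\leq C\delta^{-\alpha}$, producing a bound $C\delta^{1-\alpha}N$; since $\beta<1-\alpha$, the factor $\delta^{1-\alpha-\beta}\leq 1$ is absorbed into $\kappa_1$. For the truncation contribution I would use the PSWF Sobolev approximation estimate $\|f-\pi_{n,c}[f]\|_{\calL^2([-1,1])}\leq C n^{-\mu'}\|f\|_{\calH^{\mu'}([-1,1])}$ for $\mu'\in(0,\nu+\tfrac{d-1}{2})$ (also recalled in Section \ref{S:prolate}); integrating in $\theta$ and applying the Radon--Sobolev identity in the reverse direction then bounds $\int_{\mathbb{S}^{d-1}}\|f_{r,\sigma}(\cdot,\theta)\|_{\calH^{\mu'}([-1,1])}^2\,d\theta$ by $C\|v\|_{\calH^{\mu'-(d-1)/2}(\Reals^d)}^2\leq C\|v\|_{\calH^\nu(\Reals^d)}^2$. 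From \eqref{eq-tau} one deduces $n^*\gtrsim \log\delta^{-1}/\log\log\delta^{-1}$, so $(n^*)^{-\mu'}\leq C(\log\delta^{-1})^{-\mu}$ for any $\mu<\mu'$, producing the second term of \eqref{eq:multi}.

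The main obstacle will be justifying that $f_{r,\sigma}(\cdot,\theta)$ actually has the Sobolev regularity $\mu'$ close to $\nu+\tfrac{d-1}{2}$ on $[-1,1]$ in a sense compatible with the PSWF convergence estimate: because $v$ need not vanish on $\partial B_\sigma$, its Radon transform typically jumps at $y=\pm 1$, and this is precisely what restricts $\mu$ to the open interval in \eqref{eq:multi}. Additional bookkeeping is needed to track all the rescaling constants ($\sigma$, $r$, $c=r\sigma$, $(2\pi/\sigma)^d$) so that $\kappa_1,\kappa_2$ depend only on the parameters listed, and to verify the logarithmic asymptotics of $n^*(c,\alpha,\delta)$ uniformly in $\delta\in(0,1)$.
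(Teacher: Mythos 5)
Your proposal is correct and follows essentially the same route as the paper: reduction to one-dimensional problems along each $\theta$ via the projection theorem and the Radon--Sobolev inequality (Lemma \ref{L:H-H}), the splitting of Lemma \ref{L:general} with $n=n^*$, cancellation of the polar Jacobian by the weight $p^{1-d}$ in \eqref{def:L2r}, and the bound $n^*\gtrsim \log\delta^{-1}/\log\log\delta^{-1}$ for the truncation term (the paper merely packages the one-dimensional step as Corollary \ref{C:main} before integrating over $\mathbb{S}^{d-1}$ with Cauchy--Schwarz). The only imprecisions are minor: $|\mu_{n^*,c}|^{-1}$ is actually bounded by $C(\log\delta^{-1})^{O(1)}\delta^{-\alpha}$ rather than $C\delta^{-\alpha}$, which is precisely why the slack $\beta<1-\alpha$ you invoke is needed, and the open interval for $\mu$ comes from this same $\log\log$ loss rather than from boundary jumps of the Radon data.
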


 Similarly to Remark \ref{rem1}, the statement of  Theorem \ref{T:multi} simplifies significantly for the case $d=1$; see Corollary \ref{C:main}. 
 We prove  Theorem \ref{T:multi} in  Section \ref{S:multi}.

 The parameter $N$ from Theorem \ref{T:multi} can be considered as an a priori upper bound for $\|\hat{v}\|_r$.  Indeed, the assumption $\|w  -  \hat{v}\|_{r} \leq \delta \|\hat{v}\|_r$ is natural. If the noise level is such that $\|w  -  \hat{v}\|_{r} \geq   \|\hat{v}\|_r$,  then the given data $w$ tells about $v$ as little as the trivial function $w_0 \equiv 0$. An accurate reconstruction is hardly possible in this case, since it is  equivalent to no   data given at all.

 The function $v^\delta$  in Theorem \ref{T:multi}  is not  compactly supported, in general; see also the related remark about  $v$ after Lemma \ref{L:H-H}.
 Nevertheless, only $v^\delta$  restricted to $B_\sigma$  is of interest under the assumptions of Theorem \ref{T:multi}.

 Our stability estimate \eqref{eq:multi} is  given in $\calH^s$  with  $s\leq 0$. 
 One can improve the regularity in such estimates using  the apodized reconstruction $\phi * v^\delta$, where  $*$ denotes the convolution operator
and $\phi$ is an appropriate sufficiently regular non-negative compactly supported function with 
$\|\phi\|_{\calL^1(\Reals^d)}=1$;  see, for example,  \cite[Section~6.1]{IN2020}.
In particular, \eqref{eq:multi} implies estimates for  
$\phi* v- \phi * v^\delta$  in $\calH^t$  with $t\geq 0$.

Applying Theorem \ref{T:multi} with $v:=v_1- v_2$ and $w\equiv 0$, we get the following result.

\begin{Corollary}\label{C:multi2} Let the assumptions of Theorem 1.1 hold for 
$ v:=v_1- v_2$. Let  $v_1- v_2  \in \calH^\nu(\Reals^d)$
for some  $\nu \geq 0$  (and $\nu >0$ for $d=1$).
Suppose that  $\|\hat{v}_1\ -  \hat{v}_2\|_{r} \leq \delta N$ for some $\delta \in (0,1)$ and $N>0$. Let  $\alpha \in (0,1)$.
Then, for any    $\beta \in (0, 1-\alpha)$ and any $\mu \in (0,\nu+\frac{d-1}{2})$,
      \begin{equation}\label{eq:multi2}
               \|v_1 - v_2 \|_{\calH^{-(d-1)/2}(\Reals^d)}
                \leq \kappa_1  N  \delta^{\beta} +  \kappa_2   \| v_1 - v_2 \|_{\calH^\nu(\Reals^d)}  \left( \log \delta^{-1}\right)^{-\mu},
      \end{equation}
where   $\kappa_1 = \kappa_1(c,d,r,\sigma,\alpha,\beta)$  and   $\kappa_2=
\kappa_2(c,d,r,\sigma,\alpha,\nu,\mu)$   are  the same as in   \eqref{eq:multi}.
\end{Corollary}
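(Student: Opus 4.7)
The plan is to obtain Corollary \ref{C:multi2} as a direct specialization of Theorem \ref{T:multi} by choosing the noisy data to be identically zero and the unknown function to be the difference $v_1 - v_2$.

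First I would set $v := v_1 - v_2$ and $w \equiv 0$ on $B_r$. Since the hypotheses of Theorem \ref{T1} are assumed for $v$, we have $v \in \calL^2(\Reals^d)$ with $\supp v \subset B_\sigma$, and by linearity of the Fourier transform $\hat{v} = \hat{v}_1 - \hat{v}_2$. The assumption $\|\hat{v}_1 - \hat{v}_2\|_r \leq \delta N$ then reads $\|w - \hat{v}\|_r \leq \delta N$, so the hypotheses of Theorem \ref{T:multi} are met with this choice of $v$ and $w$. The regularity assumption $v \in \calH^\nu(\Reals^d)$ transfers directly from the corollary's hypothesis on $v_1-v_2$.

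Next I would trace through the definition of the reconstruction $v^\delta$ in Theorem \ref{T:multi} with $w \equiv 0$. Then $w_{r,\theta}(x) = 0$ for all $x \in [-1,1]$ and $\theta \in \mathbb{S}^{d-1}$, so from the linearity of $\calF_{n^*,c}^{-1}$ in \eqref{def:Fnc} we obtain $\calF_{n^*,c}^{-1}[w_{r,\theta}] \equiv 0$, hence $u_{r,\sigma} \equiv 0$, and finally $v^\delta = \R^{-1}[0] \equiv 0$. Substituting $v^\delta \equiv 0$ into the stability estimate \eqref{eq:multi} yields
\begin{equation*}
\|v_1 - v_2\|_{\calH^{-(d-1)/2}(\Reals^d)} \leq \kappa_1 N \delta^\beta + \kappa_2 \|v_1 - v_2\|_{\calH^\nu(\Reals^d)} (\log \delta^{-1})^{-\mu},
\end{equation*}
which is exactly \eqref{eq:multi2} with the same constants $\kappa_1,\kappa_2$.

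There is no substantive obstacle here: the entire argument is the observation that $v \mapsto v^\delta$ is linear in the data $w$ (being a composition of the linear operators $\calF_{n^*,c}^{-1}$ and $\R^{-1}$, plus rescalings), so the zero input produces the zero output and Theorem \ref{T:multi} degenerates into a pure a priori bound on $\|v_1 - v_2\|_{\calH^{-(d-1)/2}}$. The only minor point to be careful about is that the constants $\kappa_1,\kappa_2$ in Theorem \ref{T:multi} depend only on $c,d,r,\sigma,\alpha,\beta,\nu,\mu$ and not on the particular $v$ or $w$, so they can indeed be kept identical in the statement of the corollary.
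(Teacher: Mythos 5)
Your proposal is correct and matches the paper's own argument exactly: the paper obtains Corollary \ref{C:multi2} by applying Theorem \ref{T:multi} with $v:=v_1-v_2$ and $w\equiv 0$, which is precisely what you do, including the observation that $v^\delta\equiv 0$ and that the constants carry over unchanged.
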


 The present work continues studies of 
 \cite{IN2020,IN2020+},  where we approached Problem~\ref{Problem} via  a H\"older-stable extrapolation
of $\hat{v}$ from $B_r$  to a larger ball, using  truncated series of Chebyshev polynomials.
The reconstruction of the present work is essentially different;  in particular, it does not use any extrapolation. 
However, the resulting stability estimates are analogous for both reconstructions.
In particular,  estimate  \eqref{eq:multi} resembles   \cite[Theorem~3.1]{IN2020} in dimension $d=1$ and resembles  \cite[Theorem~3.2]{IN2020+}
(with $s= -\frac{d-1}{2}$ and $\kappa=1$) in dimension $d\geq  1$;
  estimate  \eqref{eq:multi2} resembles   \cite[Corollary 3.3]{IN2020} in dimension $d=1$ and resembles  \cite[Corollary~3.4]{IN2020+}
(with $s= -\frac{d-1}{2}$ and $\kappa=1$) in dimension $d\geq  1$.
Note also that,  in the domain of  coefficient inverse problems, estimates of the form \eqref{eq:multi} and \eqref{eq:multi2}  are known as H\"older-logarithmic stability estimates;
see  \cite{IN2020,IN2020+, IN2013++, HH2001, HW2017}  and references therein.

The main advantages of the present work in comparison with  \cite{IN2020,IN2020+} are the following:

\begin{itemize}
\item We allow the "noise"  in Problem \ref{Problem} to be from a larger space
$\calL^2_r$  defined by \eqref{def:L2r} in contrast with $\calL^\infty$.

\item We  use   the straightforward 
  formulas  \eqref{f:inverse}, \eqref{def:Fnc},   \eqref{eq:general} 
  in place of  the  roundabout way   that requires   extrapolation of  
$\hat{v}$ from $B_r$  to a larger ball  and  leads to additional numerical issues.

\end{itemize}

On the other hand,  the advantages  of  \cite{IN2020,IN2020+} in comparison with the present work include: 
  explicit expressions for quantities like $\kappa_1$ and $\kappa_2$ in  \eqref{eq:multi};
 more advanced norms $\|\cdot\|$ for reconstruction errors like $v - v^{\delta}$ in  \eqref{eq:multi}, where $\|\cdot\| = 
 \|\cdot\|_{\calL^2(\Reals^d)}$
 in   \cite{IN2020} and  $\|\cdot\| = 
 \|\cdot\|_{\calH^s(\Reals^d)}$ with  any $s \in (-\infty, \nu)$
  in  \cite{IN2020+}.  The reason  is 
  purely due to the fact that the PSWFs theory is still
less developed than the theory of Chebyshev polynomials and the classical Fourier transform theory.  
In connection with further developments  in the PSWFs theory
that would improve the  results of the present work on Problem \ref{Problem}, see Remarks  \ref{R21}, \ref{R22}, and \ref{R23} in Section~\ref{S:prolate}.


 Note also that  the functions  $(\psi_{j,c})_{j \in \Naturals}$  for large $j$, 
yield a new example of exponential instability for Problem \ref{Problem} in dimension $d=1$. 
This instability behaviour follows from  the properties of  $\psi_{j,c}$ and $\mu_{j,c}$ recalled in  Section~\ref{S:prolate} and  the  result formulated in Remark~\ref{R22}.
%
%
However, known estimates for the derivatives of PSWFs  do not allow yet to say that this example is more strong than
the example constructed in \cite[Theorem 5.2]{IN2020}.

The aforementioned possible developments in the PSWFs theory and
further development of the approach of the present work to Problem \ref{Problem},
including its numerical implementation, will be addressed in further articles.

The further structure of the  paper is as follows.
 Some prilimary results are recalled in Section \ref{S:preliminaries}. In Section \ref{S:1D}, we  prove our estimates  in dimension $d=1$ modulo a technical lemma, 
 namely,  Lemma \ref{L:delta-mu}. 
 In Section \ref{S:multi}, we prove Theorem~\ref{T1}, Theorem~\ref{T:multi} and  Corollary~\ref{C:multi2}
 based on the  results given in Sections \ref{S:preliminaries} and  \ref{S:1D}.
  In Section  \ref{S:detailed}, we prove Lemma \ref{L:delta-mu}. 

\section{Preliminaries}\label{S:preliminaries}
 In this section, we recall some  known results on PSWFs  and on the Radon transform that we will use in the proofs of Theorems \ref{T1} and \ref{T:multi}. In addition, we   prove Lemma \ref{L:general}
 and give a stability estimate for the inverse Radon transform; see Lemma \ref{L:H-H}.

\subsection{Prolate spheroidal wave functions}\label{S:prolate}
In connection with the facts presented in this subsection we refer to  
 \cite{Slepian1983, BK2017, BK2017+, RX2007, Wang2010, STR2006}   and references therein.


Originally, the prolate spheroidal wave functions  $(\psi_{n,c})_{n \in \Naturals}$ were discovered as the eigenfunctions of the  
 following spectral problem:
\begin{equation}\label{L-prob}
	\mathcal{L}_c \psi = \chi \psi, \qquad \psi \in C^2([-1,1]),   
\end{equation}
where $\chi$ is the spectral parameter and 
\begin{align*}	
	\calL_c [\psi] := - \dfrac{d}{dx} \left[(1-x^2) \dfrac{d \psi}{ dx}\right] + c^2 x^2 \psi.
\end{align*} 
We also consider  the operator $\mathcal{Q}_c$  defined  on $\calL_2([-1,1])$ by 
\begin{equation}
	\mathcal{Q}_c[f](x):=\frac{c}{2\pi} \calF_c^* \left[\calF_c [f]\right](x)= \int_{-1}^1 \dfrac{\sin c(x-y)}{ \pi (x-y)} f(y) dy, 
\end{equation}
where $\calF_c^*$ is the conjugate operator to $\calF_c$  defined by \eqref{def:Fc}. 
  The prolate spheroidal wave functions  $(\psi_{n,c})_{n \in \Naturals}$  are  eigenfunctions for 
  problem \eqref{L-prob} and for  both operators 
 $\calF_c$ and $\mathcal{Q}_c$.

Let $(\chi_{n,c})_{n \in \Naturals}$ denote the eigenvalues of problem \eqref{L-prob}. It is  known that $(\chi_{n,c})_{n \in \Naturals}$ are real, positive, simple, that is, one can assume that 
\[
	0<\chi_{n,c}<\chi_{n+1,c}, \qquad \text{for all } n \in \Naturals. 
\]  
In addition,   the following  estimates hold:
\begin{equation}\label{chi<chi}
		n(n+1) < \chi_{n,c}<n(n+1)+c^2.
\end{equation}
 If  $ \mu_{n,c} $ and $ \lambda_{n,c}$ are the corresponding eigenvalues of   $\calF_c$  and $\mathcal{Q}_c$, respectively, then
%
%
%
%
\begin{equation}\label{eq:eigenrel}
\begin{aligned}
	\mu_{n,c} = i^n \sqrt{\dfrac{2\pi}{c} \lambda_{n,c}} \quad \text{ and } \quad
		1>\lambda_{n,c} > \lambda_{n+1,c}>0.
\end{aligned}
\end{equation}
Furthermore,  each $\lambda_{n,c}$ is non-decreasing with respect to $c$. Using also \cite[formula  (6)]{BK2017},  we  find that
\begin{equation}\label{eq:biglam}
  \left\lfloor\frac{2c}{\pi}\right\rfloor-1\leq \Big|\{n \in \Naturals \st \lambda_{n,c}\geq 1/2\}\Big|\leq  \left\lceil \frac{2c}{\pi}\right\rceil+1.
 		\end{equation}	
%
%
where $\lfloor \cdot \rfloor$ and  $\lceil \cdot \rceil$  denote the floor and the ceiling functions, respectively,  and $|\cdot|$ is the number of elements.
  We also employ the following estimate from   
\cite[Corollary 3]{BK2017}:   
for $n \geq \max\{3, \frac{2c}{\pi}\}$,
	\begin{equation}\label{eigenestimate}
				A(n,c)^{-1}   
			e^{-2\tilde n (\log  \tilde{n}-\kappa)} \leq  \lambda_{n,c} \leq 
		A(n,c) 	e^{-2\tilde n (\log  \tilde{n}-\kappa)}, 
	\end{equation}
	 where  $\nu_1\geq 1$, $\nu_2,\nu_3 \geq 0$ are some fixed  constants, 
	 	\[
		A(n,c):= \nu_1 n^{\nu_2} \left(\frac{c}{c+1}\right)^{-\nu_3} e^{(\pi c)^2/4n}.
		\]
	and  
 \begin{equation}\label{def:kappa}
	  \kappa := \log \left( \dfrac{ec}{4}\right), \qquad \tilde{n} = \tilde{n}(n):= n+\dfrac 12.
\end{equation}
\begin{Remark}\label{R21}
Apparently, proceeding from the approach of \cite{BK2017}, one can give explicit values for the constants $\nu_1$, $\nu_2$,  $\nu_3$ in the expression for  $A(n,c).$
\end{Remark}
\noindent
We also recall from \cite[formula (11)]{STR2006}  that, for all $n \in \Naturals$ and $c>0$, 
\begin{equation}\label{norm:Linfty}
	\max_{0 \leq j \leq n} \max_{|x|\leq 1} 
	|\psi_{j,c}(x)|\leq 2\sqrt{n}. 
\end{equation}
\begin{Remark}\label{R22}
 Proceeding from  \eqref{L-prob},  \eqref{chi<chi}, and \eqref{norm:Linfty}, one can show  that,   for  any $m\in \Naturals$,
 \[ 
 \|\psi_{n,c}\|_{C^m[-1,1]} = O ( n^{2m+1/2}) \qquad \text{as  $n \rightarrow \infty$.}
 \]
\end{Remark}

	 Next, we recall results on the spectral approximation by PSWFs in Sobolev-type spaces; see  \cite{Wang2010}. For a real $\nu\geq 0$,  let
	 \begin{equation}
	 	\widetilde\calH^\nu_c([-1,1]):= \left\{ f \in \calL^2([-1,1]) \st 
	 	 \|f\|_{\widetilde\calH^\nu_c} <\infty \right\},
	 \end{equation}
	   where
	   \begin{align*}
	   	 \|f\|_{\widetilde\calH^\nu_c([-1,1])} := \left(\sum_{n\in \Naturals} (\chi_{n,c})^{\nu} |\hat{f}_{n,c}|^2\right)^{1/2}
		\qquad   \hat{f}_{n,c}:= \int_{-1}^1 \psi_{n,c} (y) f(y)dy.
	   \end{align*}
 Recall  from \eqref{def:pi_n} that 
 \[
 	\pi_{n}[f]=  \sum_{j =0 }^n  \hat{f}_{j,c}\psi_{j,c}(x), \qquad n\in \Naturals.
 \]
 Note that 
  $\pi_{n}[f] \rightarrow f$ as $n \rightarrow \infty$ since    	 $(\psi_{j,c}(x))_{j \in \Naturals}$  form an orthonormal basis in $\calL^2([-1,1])$.
  Furthermore,   for any $0 \leq \mu \leq \nu$,
  \begin{equation}\label{eq:pi1}	
  	\left\|f-\pi_{n}[f] \right\|_{\widetilde\calH^\mu_c([-1,1])} \leq n^{\mu-\nu}   \|f\|_{\widetilde\calH^\nu_c([-1,1])}.
  \end{equation}
  The standard Sobolev space  $\calH^\nu[(-1,1)]$ is  embedded in  $\widetilde\calH^\nu_c([-1,1])$. In fact, we have that
  \begin{equation}\label{eq:pi2}
  		\|f\|_{\widetilde\calH^\nu_c[(-1,1)]}   \leq  C(1+c^2)^{\nu/2} \|f\|_{\calH^\nu([-1,1])},
  \end{equation}
  where $C$ is a constant independent of $c$ and $f$ assuming that $c \geq c_0>0$.
\begin{Remark}\label{R23}
Proceeding from the results of \cite{Wang2010}, one  can obtain an explicit  estimate for
the constant $C=C(c_0, \nu)$ in  \eqref{eq:pi2}.
Besides, one can establish an  upper bound for 
$\|\varphi f\|_{\calH^\nu ([-1,1])}$  in terms of 
$\| f \|_{\widetilde{\calH}^\nu_c([-1,1])}$, for fixed $\nu>0$,  where 
$\varphi$ is a  smooth real-valued function
appropriately vanishing at the ends of the interval $[-1,1]$ and non-vanishing elsewhere.
\end{Remark}

\subsection{Proof of  Lemma \ref{L:general}}\label{S:Lemma}

First, we observe that 
\[\calF_{n,c}^{-1}[\calF_c[f] ]  =  \pi_n [f].\]
 Using also the linearity of $\calF_{n,c}^{-1}$, we derive 
\[
	f - \calF_{n,c}^{-1}[w]    
	=  f  -\pi_n [f] + \calF_{n,c}^{-1}[\calF_c[f] ] -  \calF_{n,c}^{-1}[w]
	= 
	 f - \pi_n [f] +   \calF_{n,c}^{-1} [u],
\] 
where $u := \calF_c[f] - w$. 
Therefore,
\[
	\left\| f - \calF_{n,c}^{-1}[w]\right\|_{\calL^2([-1,1])}  \leq 
	 \left\|\calF_{n,c}^{-1} [u]\right\|_{\calL^2([-1,1])}+ \left\|f - \pi_n [f] \right\|_{\calL^2([-1,1])} .
\]
 Due to \eqref{eq:eigenrel}, we have that
    $|\mu_{j,c}| \geq  |\mu_{n,c}|$ for all $j \leq n$. Using  also the orthonormality of the basis 
    $(\psi_{j,c})_{j \in \Naturals}$ in $\calL^2([-1,1])$, we estimate
%
      	\begin{align*}
   	\| \calF_{n,c}^{-1} [u] \|_{\calL^2([-1,1])}^2 &= \left\|  \sum_{j =0}^n \dfrac{1}{\mu_{j,c}}\psi_{j,c}(\cdot) \int_{-1}^1 \psi_{j,c} (x) u(x)dx   \right\|_{\calL^2([-1,1]) }^2   
   		 \\  &  =
   		   \sum_{j =0}^n   \dfrac{1}{|\mu_{j,c}|^2}   \left\|  \psi_{j,c}(\cdot) \int_{-1}^1 \psi_{j,c} (x) u(x)dx   \right\|_{\calL^2([-1,1]) }^2   
   		   \\
   		   & \leq  \dfrac{1}{|\mu_{n,c}|^2}
   		   \sum_{j =0}^n \left\|  \psi_{j,c}(\cdot) \int_{-1}^1 \psi_{j,c} (x) u(x)dx   \right\|_{\calL^2([-1,1]) }^2 
   		   \\
   		    &\leq  \dfrac{1}{|\mu_{n,c}|^2}
   		   \sum_{j =0}^\infty \left\|  \psi_{j,c}(\cdot) \int_{-1}^1 \psi_{j,c} (x) u(x)dx   \right\|_{\calL^2([-1,1]) }^2 
   		     = 
   		     \left(\dfrac{  \|u\|_{ \calL^2([-1,1])} }{|\mu_{n,c}|}\right)^2.  
   	\end{align*}
   	Recalling that $ \|u\|_{ \calL^2([-1,1])} \leq \delta$ (by assumptions) and combining the formulas above, we complete the proof. 
%
%

\subsection{Radon Transform}
 
The Radon transform $\R$ defined   in  \eqref{def:R}
arises in various domains of pure and applied mathematics.
Since  Radon's work \cite{Radon},   this transform  and its applications received significant attention and its properties are  well studied; see, for example, \cite{Naterrer2001} and references therein.
In particular, the Radon transform  $\R[v]$  is closely related to the Fourier transform $\hat v $  (see  \eqref{eq:Fourier}) via the following formula:
\begin{equation}\label{eq:projection}
 	\hat{v} (s \theta ) =  \dfrac{1}{(2\pi)^{d}} \int_{-\infty}^{\infty} e^{i   st} \mathcal{R}  [v] (t,\theta) dt, \qquad  s \in \Reals, \ \theta \in  \mathbb{S}^{d-1}.
\end{equation}
%
In the theory of Radon transform, formula \eqref{eq:projection} is  known as the projection theorem. 
Note that  one can define  the inverse transform $\R^{-1}$ by combining   \eqref{eq:projection}  with   inversion formulas for the 
 Fourier transform: 
 \begin{equation}\label{inv:radon}
\begin{aligned}
  \mathcal{R}^{-1} [u](q)&:= 
  \dfrac{1}{(2\pi)^{d-1}} \int_{\mathbb{S}^{d-1}} \int_{0}^{+\infty}
  e^{-is \theta q} \hat{u}(s, \theta) s^{d-1} ds\, d\theta, \qquad q\in \Reals^d,
 \\ \hat{u}(s, \theta) &:=  \dfrac{1}{2\pi} \int_\Reals e^{ist} u(t, \theta) dt,
 \qquad \qquad s \in \Reals, \ \theta \in \mathbb{S}^{d-1}.
 \end{aligned}
 \end{equation}
  For other inversion formulas for $\R$; see \cite{Radon} and, for example, \cite[Section II.2]{Naterrer2001}.
  
   For real $\nu$, let
 \begin{align*}
 	\calH^{\nu}(\Reals^d)&:=  \{v \st \|v\|_{\calH^{\nu}(\Reals^d)} <\infty\},
     \\
 	\|v\|_{\calH^{\nu}(\Reals^d)}&:= \left(\int_{\Reals^{d}} (1+p^2)^{\nu} |\hat{v}(p)|^2 dp\right)^{1/2},
\\
 	\calH^{\nu}(\Reals \times \mathbb{S}^{d-1} ) &:= \{u \st \|u \|_{\calH^\nu(\Reals \times \mathbb{S}^{d-1} )}<\infty\},\\
 	\|u \|_{\calH^\nu(\Reals \times \mathbb{S}^{d-1} )} &:=  \left(\int_{\mathbb{S}^{d-1}} \int_{-\infty}^{+\infty} (1+s^2)^{\nu}\, |\hat{u}(s,\theta)|^2 ds\, d\theta\right)^{1/2},
 \end{align*}
 where  $v$, $u$ are distributions on $\Reals^{d}$ and  $\Reals \times \mathbb{S}^{d-1}$, respectively.
According to \cite[Theorem 5.1]{Naterrer2001},    if  $v \in \calH^{\nu}(\Reals^{d})$ and $\supp v \subset B_1$  then 
 \begin{equation}\label{eq:H-H}
     a(\nu,d)\|v\|_{\calH^{\nu}(\Reals^{d})} \leq \|\mathcal{R}[v]\|_{\calH^{\nu+ (d-1)/2}(\Reals \times \mathbb{S}^{d-1} ) } \leq
      b (\nu,d)  \|v\|_{\calH^{\nu}(\Reals^{d})}.
 \end{equation}
 In addition,  one can recover  explicit expressions for $a(\nu,d)$
  and  $b(\nu,d)$ from the proof of \cite[Theorem 5.1]{Naterrer2001}. 
We will   also use the following result generalizing  the left inequality  in \eqref{eq:H-H}. 

%
 \begin{Lemma}\label{L:H-H}
 Let   $u \in \calH^{\nu+(d-1)/2}(\Reals \times \mathbb{S}^{d-1})$,
 $\supp u \subseteq [-1,1]\times  \mathbb{S}^{d-1}$, and $u(s,\theta) = u(-s,-\theta)$ for 
 all $(s,\theta)\in \Reals \times \mathbb{S}^{d-1}$. Then, 
 \[
 		 a(\nu,d)
 		 \|v\|_{\calH^{\nu}(\Reals^{d})} 
 		 \leq 
 		 \|u\|_{\calH^{\nu +(d-1)/2}(\Reals \times \mathbb{S}^{d-1} ) },
 \]
 where    $v:=\mathcal{R}^{-1}[u]$ is defined  by \eqref{inv:radon} 
  and $a(\nu,d)$ is the same as in \eqref{eq:H-H}.
 \end{Lemma}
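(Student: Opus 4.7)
The plan is to reduce the inequality to a direct Fourier-side comparison and thereby bypass entirely the potential non-compactness of the support of $v := \R^{-1}[u]$. The first step is to read off the relation between $\hat v$ and $\hat u$ from definition \eqref{inv:radon}. Writing out the Fourier inversion formula $v(q) = \int_{\Reals^d} e^{-ipq}\hat v(p)\,dp$ in polar coordinates $p=s\theta$, $s>0$, $\theta\in\mathbb{S}^{d-1}$, and matching term by term with \eqref{inv:radon}, one obtains
\[
\hat v(s\theta) \;=\; \frac{1}{(2\pi)^{d-1}}\,\hat u(s,\theta), \qquad s>0,\ \theta\in\mathbb{S}^{d-1}.
\]

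Using polar coordinates on $\Reals^d\setminus\{0\}$, this identification gives
\[
\|v\|_{\calH^\nu(\Reals^d)}^2
\;=\; \frac{1}{(2\pi)^{2(d-1)}} \int_{\mathbb{S}^{d-1}}\int_0^\infty (1+s^2)^{\nu}\, s^{d-1}\, |\hat u(s,\theta)|^2\, ds\, d\theta.
\]
To put $\|u\|_{\calH^{\nu+(d-1)/2}}^2$ into a comparable form I would use the symmetry $u(s,\theta)=u(-s,-\theta)$, which upon the change of variables $t\mapsto -t$ in the one-dimensional Fourier integral defining $\hat u$ yields $\hat u(-s,-\theta)=\hat u(s,\theta)$. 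Folding the outer integral at $s=0$ and relabelling $\theta\mapsto -\theta$ on the sphere then gives
\[
\|u\|_{\calH^{\nu+(d-1)/2}(\Reals\times\mathbb{S}^{d-1})}^2
\;=\; 2\int_{\mathbb{S}^{d-1}}\int_0^\infty (1+s^2)^{\nu+(d-1)/2}\, |\hat u(s,\theta)|^2\, ds\, d\theta.
\]

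At this stage the desired estimate reduces to the pointwise weight comparison $(1+s^2)^{\nu} s^{d-1}\le (1+s^2)^{\nu+(d-1)/2}$ on $s>0$, i.e.\ to $s\le\sqrt{1+s^2}$. This is exactly the integrand comparison underlying the left inequality in \eqref{eq:H-H} as proved in \cite[Theorem~5.1]{Naterrer2001}, so invoking that argument produces the lemma with the same constant $a(\nu,d)$. Crucially, no support assumption on $v$ is used anywhere: the hypothesis $\supp v\subset B_1$ in \eqref{eq:H-H} is needed only for the opposite (upper) inequality, where the direction of the weight comparison has to be reversed near $s=0$ and the compact support of $u$ becomes essential. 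The main obstacle I anticipate is constant bookkeeping: because the lemma asserts the \emph{same} $a(\nu,d)$ as in \eqref{eq:H-H}, the last step must reproduce Natterer's comparison exactly rather than appeal to any cruder bound.
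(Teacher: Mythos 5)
Your proof is correct and takes essentially the same route as the paper, whose proof of Lemma \ref{L:H-H} is just a deferral to the argument for the left inequality of \eqref{eq:H-H} in \cite[Theorem~5.1]{Naterrer2001} together with the identity $u=\R[v]$ — which is exactly the relation $\hat u(s,\theta)=(2\pi)^{d-1}\hat v(s\theta)$ that you derive from \eqref{inv:radon} before folding the $s$-integral by the symmetry and comparing the weights $(1+s^2)^{\nu}s^{d-1}$ and $(1+s^2)^{\nu+(d-1)/2}$. Your side remark that the compact-support hypothesis is only needed for the opposite (upper) inequality correctly identifies why the lemma can be stated as a generalization of the left inequality alone.
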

In fact, the proof of Lemma \ref{L:H-H} is identical to the arguments of \cite[Theorem 5.1]{Naterrer2001} for the left inequality  in \eqref{eq:H-H}.    In addition, we use also that 
$u = \mathcal{R}[v]$. 
Note that $v$ defined by \eqref{inv:radon}  might not be compactly supported; see, for example, \cite{Novikov2002} for the asymptotic analysis of $\mathcal{R}^{-1}[u]$ at infinity. 
  
%
%
%
%
%
%
%
%



\section{Stability estimates in 1D}\label{S:1D}
The main result of this section is the following theorem.
\begin{Theorem}\label{T:detailed}
  Let $f,w \in \calL^2([-1,1])$  and 
   $
    	\| \calF_c [f] - w\|_{\calL^2} \leq \delta  
   $ for some $\delta \in  (0,1)$. 
Suppose  that $f \in \calH^\nu([-1,1])$, $\nu> 0$. 
Then, 
\begin{equation}\label{eq:detailed}
\begin{aligned}
		\|f - \calF_{n^*,c}^{-1} [w]\|_{\calL^2([-1,1])}
 	\leq    \gamma_1  c^{-\gamma_2} (1+c)^{\gamma_3} (1+\rho)^{\gamma_4}
    \exp\left(\dfrac{\pi^2c  \log(1+\rho) }{2 e \rho} \right)
      \delta^{1-\alpha} 
      \\ 
     + C(1+c^2)^{\nu/2}  \|f\|_{\calH^\nu([-1,1])}  \left( 2+   \dfrac{ec}{4}\cdot \dfrac{\rho }{\log(1+\rho)}  \right)^{-\nu},
      \end{aligned}
      \end{equation}
      where
$\alpha \in (0,1)$, $\rho = \dfrac{4}{ec}  \alpha  \log  (\delta^{-1})$,    $ n^* = n^*(c,\alpha,\delta) $ is defined by   \eqref{def:n-star},
 $C$ is the constant from \eqref{eq:pi2}, and    $\gamma_1, \gamma_2, \gamma_3,\gamma_4$ are some positive constants independent of $c$, $\alpha$, $\delta$.
\end{Theorem}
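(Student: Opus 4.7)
The plan is to apply Lemma \ref{L:general} with $n = n^*$ and then bound the two resulting summands separately:
\begin{equation*}
\|f - \calF_{n^*,c}^{-1}[w]\|_{\calL^2([-1,1])} \leq \frac{\delta}{|\mu_{n^*,c}|} + \|f - \pi_{n^*,c}[f]\|_{\calL^2([-1,1])}.
\end{equation*}
The first summand will give rise to the H\"older term $\delta^{1-\alpha}$ in \eqref{eq:detailed}, and the second summand will give the logarithmic term.

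For the first summand, I would combine \eqref{eq:eigenrel} with the lower bound on $\lambda_{n^*,c}$ from \eqref{eigenestimate}, after checking that the hypothesis $n^* \geq \max\{3, 2c/\pi\}$ holds (which follows from $\tau \geq 1$ and the inequality $ec/4 > 2c/\pi$). This yields
\begin{equation*}
\frac{1}{|\mu_{n^*,c}|} \leq \sqrt{\frac{c\,A(n^*,c)}{2\pi}}\exp\bigl(\tilde n^*(\log \tilde n^* - \kappa)\bigr).
\end{equation*}
The core computation is to estimate the exponent using \eqref{def:n-star}--\eqref{eq-tau}. Since $n^* \leq 3 + \tau ec/4$, setting $u := \tau + 14/(ec)$ gives $\tilde n^* \leq (ec/4)\,u$ and $\log\tilde n^* - \kappa = \log((4n^*+2)/(ec)) \leq \log u$, hence the clean bound $\tilde n^*(\log\tilde n^* - \kappa) \leq (ec/4)\,u\log u$. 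Expanding $u\log u$ around $\tau$ and invoking $\tau\log\tau = \rho = \frac{4\alpha}{ec}\log(\delta^{-1})$ produces the desired $\delta^{-\alpha}$ factor plus lower-order corrections that fit into a polynomial envelope in $1/c$, $1+c$, and $1+\rho$. The prefactor $\sqrt{A(n^*,c)}$ then contributes the remaining polynomial powers of $n^*$ and $c/(c+1)$, together with the exponential $\exp((\pi c)^2/(8n^*))$, which I would control via the lower bound $n^* \geq (ec/4)\cdot\rho/\log(1+\rho)$. This last bound is obtained from $\tau \geq \rho/\log(1+\rho)$, itself a consequence of $\tau \leq 1+\rho$ (since otherwise $\tau\log\tau > \rho$ would fail). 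The result matches the exponential $\exp(\pi^2 c\log(1+\rho)/(2e\rho))$ appearing in \eqref{eq:detailed}.

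For the second summand, I would apply \eqref{eq:pi1} with $\mu = 0$; since $\|\cdot\|_{\widetilde\calH^0_c} = \|\cdot\|_{\calL^2}$ by the orthonormality of the PSWFs, this gives $\|f - \pi_{n^*,c}[f]\|_{\calL^2} \leq (n^*)^{-\nu}\|f\|_{\widetilde\calH^\nu_c}$. The embedding \eqref{eq:pi2} converts this into $C(1+c^2)^{\nu/2}\|f\|_{\calH^\nu}$, and invoking once more the lower bound $n^* \geq 2 + (ec/4)\cdot\rho/\log(1+\rho)$ produces exactly the factor $(2 + \tfrac{ec}{4}\cdot\tfrac{\rho}{\log(1+\rho)})^{-\nu}$ in \eqref{eq:detailed}.

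The main obstacle will be the bookkeeping in the first summand: one must verify that after all the expansions, the residual $c$- and $\rho$-dependence genuinely fits into the advertised envelope $c^{-\gamma_2}(1+c)^{\gamma_3}(1+\rho)^{\gamma_4}$, rather than producing an unwanted exponential in $1/c$ or in $\rho$. The identity $\tilde n^*(\log\tilde n^* - \kappa) \leq (ec/4)\,u\log u$ with $u = \tau + 14/(ec)$ keeps this tractable in both regimes $c\to 0$ (where $u \sim 14/(ec)$) and $c$ bounded below (where $u \sim \tau$), reducing everything to a routine expansion around $\tau\log\tau = \rho$.
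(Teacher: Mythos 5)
Your proposal is correct and follows essentially the same route as the paper: Lemma \ref{L:general} with $n=n^*$, the spectral-decay bound \eqref{eq:eigenrel}--\eqref{eigenestimate} combined with the elementary properties of $\tau$ (the paper packages your ``expand $u\log u$ around $\tau$'' step and the bounds $\rho/\log(1+\rho)\leq\tau\leq 1+\rho$ into Lemmas \ref{L:ass1} and \ref{L:tau}, and the whole first summand into Lemma \ref{L:delta-mu}), and \eqref{eq:pi1}--\eqref{eq:pi2} for the projection error. The details of the exponent bookkeeping are organized slightly differently but are equivalent.
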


 Theorem  \ref{T:detailed} 
follows directly by combining  estimate \eqref{eq:pi1} with $\mu =0$,  estimate \eqref{eq:pi2},
   Lemma~\ref{L:general}, and  the following lemma.
%
\begin{Lemma}\label{L:delta-mu} 
Let  $c$, $\alpha$, $\delta$, $\rho$, $n^*$ be the same as in Theorem \ref{T:detailed}. Then
 \begin{equation}\label{delta-mu}
\dfrac{\delta}{|\mu_{n^*,c}|} \leq   \gamma_1  c^{-\gamma_2} (1+c)^{\gamma_3} (1+\rho)^{\gamma_4}
    \exp\left(\frac{\pi^2c  \log(1+\rho) }{2 e \rho} \right)
      \delta^{1-\alpha}.
\end{equation}
\end{Lemma}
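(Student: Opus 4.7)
The plan is to combine the singular-value relation \eqref{eq:eigenrel} with the asymptotic estimate \eqref{eigenestimate} for $\lambda_{n,c}$, and then to exploit the implicit definition $\tau\log\tau = \rho$ so that the bulk of the resulting exponent matches exactly $\alpha\log(\delta^{-1})$, producing a factor $\delta^{-\alpha}$ that combines with $\delta$ on the left-hand side into $\delta^{1-\alpha}$. The remaining factors will give the polynomial prefactor $c^{-\gamma_2}(1+c)^{\gamma_3}(1+\rho)^{\gamma_4}$ and the exponential correction $\exp(\pi^2c\log(1+\rho)/(2e\rho))$.

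First I would verify that the hypothesis $n^* \geq \max\{3,2c/\pi\}$ of \eqref{eigenestimate} holds: the bound $n^*\geq 3$ is immediate from $\tau\geq 1$, while $n^*\geq 2c/\pi$ follows from $n^* \geq 2 + ec/4$ together with the numerical inequality $e\pi > 8$. Combining \eqref{eq:eigenrel} with \eqref{eigenestimate} then yields
\[
\frac{1}{|\mu_{n^*,c}|} \leq \sqrt{\frac{c}{2\pi}}\,\sqrt{A(n^*,c)}\,\exp\bigl(\tilde n^*(\log\tilde n^* - \kappa)\bigr).
\]
Setting $M := ec/4 = e^{\kappa}$ and using $\tilde n^* \in [\tau M + 5/2,\, \tau M + 7/2]$, I would decompose
\[
\tilde n^*(\log\tilde n^* - \kappa) = \tau M\log\tau \;+\; (\tilde n^* - \tau M)\log\tau \;+\; \tilde n^*\log\bigl(\tilde n^*/(\tau M)\bigr).
\]
The defining relation \eqref{eq-tau} gives $\tau M\log\tau = M\rho = \alpha\log(\delta^{-1})$, producing the key factor $\delta^{-\alpha}$; the remaining two terms are lower order and, via the inequality $\log(1+x)\leq x$, contribute only a factor of the form $\tau^{7/2}$ together with bounded corrections.

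The next ingredient is the bound $\tau \leq 1+\rho$. This follows from observing that $f(\tau) := \tau - 1 - \tau\log\tau$ satisfies $f(1)=0$ and $f'(\tau) = -\log\tau \leq 0$ for $\tau \geq 1$, so $f(\tau)\leq 0$ and $\tau-1 \leq \tau\log\tau = \rho$. Combined with $\log\tau = \rho/\tau$, this gives the two-sided information $\rho/\log(1+\rho)\leq \tau \leq 1+\rho$. Using $n^* \geq \tau M$, I then bound the exponential factor in $\sqrt{A(n^*,c)}$ by
\[
\exp\!\left(\frac{\pi^2 c^2}{8 n^*}\right) \leq \exp\!\left(\frac{\pi^2 c}{2e\tau}\right) \leq \exp\!\left(\frac{\pi^2 c\log(1+\rho)}{2e\rho}\right),
\]
which matches exactly the exponential factor in \eqref{delta-mu}. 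The remaining polynomial contributions $\sqrt{c/(2\pi)}$, $(n^*)^{\nu_2/2}$, $((c+1)/c)^{\nu_3/2}$, and $\tau^{7/2}$ are then reorganised, using $\tau\leq 1+\rho$ and $n^*\leq 3+(1+\rho)M$, into the product form $\gamma_1 c^{-\gamma_2}(1+c)^{\gamma_3}(1+\rho)^{\gamma_4}$. Multiplying through by $\delta$ completes the argument.

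The main obstacle is the careful tracking of lower-order terms in the expansion of $\tilde n^*(\log\tilde n^*-\kappa)$: the $+3$ offset in the definition of $n^*$ produces additive errors that must be controlled sharply (through $\log(1+x)\leq x$ and the inequality $\tilde n^*/(\tau M) - 1 \leq 7/(2\tau M)$) so as not to contaminate the leading $\delta^{-\alpha}$ behaviour, while remaining compatible with the clean separation into $c$-, $\rho$-, and $\delta$-dependent factors. A secondary but essential subtlety is the two-sided inequality $\rho/\log(1+\rho)\leq\tau\leq 1+\rho$, which converts the implicitly defined $\tau$ into explicit bounds in $\rho$ and is precisely what produces both the $(1+\rho)^{\gamma_4}$ prefactor and the exponential correction appearing in the statement.
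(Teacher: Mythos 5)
Your argument follows the paper's own proof essentially step for step: the same verification that $n^*\ge\max\{3,2c/\pi\}$ (via $e\pi>8$), the same combination of \eqref{eq:eigenrel} and \eqref{eigenestimate}, the same decomposition of $\tilde n(\log\tilde n-\kappa)$ around $\tau\, ec/4$ so that the defining relation \eqref{eq-tau} produces the factor $\delta^{-\alpha}$, the same two-sided bound $\rho/\log(1+\rho)\le\tau\le 1+\rho$ (the paper's Lemma \ref{L:tau}), and the same treatment of $e^{(\pi c)^2/4n^*}$ yielding the factor $\exp\bigl(\pi^2c\log(1+\rho)/(2e\rho)\bigr)$.

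The one place where your bookkeeping deviates is the residual term $\tilde n\log\bigl(\tilde n/(\tau M)\bigr)$, with $M:=ec/4$ and $q:=\tilde n-\tau M\le 7/2$. You bound it by $\tilde n\cdot q/(\tau M)=q+q^2/(\tau M)$ and treat the second summand as a ``bounded correction''. But $q^2/(\tau M)\le 49/(ec)$, so after exponentiating you pick up a factor $e^{O(1/c)}$, which is \emph{not} absorbable into $\gamma_1 c^{-\gamma_2}$ for small $c$; since the constants $\gamma_i$ in \eqref{delta-mu} must be independent of $c$, your estimate as written proves the lemma only for $c$ bounded away from $0$. The paper's Lemma \ref{L:ass1} avoids this by splitting the coefficient before applying $\log(1+x)\le x$: it uses that inequality only on $\tau M\log\bigl(1+q/(\tau M)\bigr)\le q$, and keeps the remaining piece as $q\log(4\tilde n/c)$, i.e.\ as the polynomial factor $(4\tilde n/c)^{q}\le\bigl(14(c+1)/c\bigr)^{3.5}(1+\rho)^{3.5}$ --- which is precisely where the $c^{-\gamma_2}(1+c)^{\gamma_3}$ structure of \eqref{delta-mu} comes from. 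Equivalently, in your own decomposition you should bound $q\log\bigl(\tilde n/(\tau M)\bigr)\le q\log\bigl(1+14/(ec)\bigr)$ and exponentiate to $\bigl(1+14/(ec)\bigr)^{7/2}$, rather than using $q\cdot q/(\tau M)$. With that repair your argument coincides with the paper's.
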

  We prove      Lemma \ref{L:delta-mu} in Section \ref{S:detailed}. 
   The proof of   Lemma \ref{L:delta-mu} is based on   two additional technical lemmas, namely, Lemma~\ref{L:tau} and  Lemma~\ref{L:ass1}.


    Theorem \ref{T:detailed} implies  the following corollary, which  is equivalent to Theorem~\ref{T:multi} in dimension $d=1$.  This corollary  is also crucial for our considerations for  $d\geq 2$ given in Section \ref{S:multi}.
 
 \begin{Corollary}\label{C:main}
 	   Let $f,w \in \calL^2([-1,1])$  and 
   $
    	\| \calF_c [f] - w\|_{\calL^2} \leq \delta   M
   $ for some $\delta \in  (0,1)$ and $M>0$. 
Suppose  that $f \in \calH^\nu([-1,1])$, $\nu> 0$.    
Let  $\alpha \in (0,1)$ and $n^*$ be defined  by \eqref{def:n-star}.
Then, for any    $\beta \in (0, 1-\alpha)$ and any $\mu \in (0,\nu)$,   
\[
	\|f - \calF_{n^*,c}^{-1} [w]\|_{\calL^2([-1,1])} \leq    C_1  M \delta^{\beta} +  C_2    \|f\|_{\calH^\nu([-1,1])}  \left( \log \delta^{-1}\right)^{-\mu},
\]
where  $C_1 = C_1(c,\alpha,\beta)>0$ and $C_2 = C_2(c,\alpha,\nu,\mu)>0$.
 \end{Corollary}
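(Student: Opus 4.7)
The plan is to deduce Corollary~\ref{C:main} directly from Theorem~\ref{T:detailed} via a homogeneity rescaling followed by elementary asymptotic estimates of each of the two prefactors in \eqref{eq:detailed}. Using the linearity of $\calF_c$ and $\calF_{n^*,c}^{-1}$, I set $\tilde f := f/M$ and $\tilde w := w/M$, so that $\|\calF_c[\tilde f] - \tilde w\|_{\calL^2}\leq \delta$ with $\delta \in (0,1)$, which fits the hypothesis of Theorem~\ref{T:detailed}. Applying that theorem to the pair $(\tilde f,\tilde w)$ and multiplying the resulting inequality through by $M$ gives
\begin{equation*}
\|f - \calF_{n^*,c}^{-1}[w]\|_{\calL^2([-1,1])} \leq A\,M\delta^{1-\alpha} + B\,\|f\|_{\calH^\nu([-1,1])},
\end{equation*}
where $A$ and $B$ denote, respectively, the first and second prefactors appearing in \eqref{eq:detailed}. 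The parameter $M$ cancels in the second term since $\|\tilde f\|_{\calH^\nu} = \|f\|_{\calH^\nu}/M$. It remains to show $A\delta^{1-\alpha}\leq C_1 \delta^{\beta}$ and $B\leq C_2(\log\delta^{-1})^{-\mu}$ with constants of the form stated in the corollary.

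For the first term, the exponential factor in $A$ is uniformly bounded in $\rho$: since $\log(1+\rho)\leq \rho$ for $\rho>0$, we have $\exp(\pi^2 c\log(1+\rho)/(2e\rho)) \leq e^{\pi^2 c/(2e)}$, a constant depending only on $c$. Recalling $\rho = (4\alpha/(ec))\log\delta^{-1}$, the polynomial factor $(1+\rho)^{\gamma_4}$ grows only polylogarithmically in $\delta^{-1}$, so for any $\varepsilon>0$ one has $(1+\rho)^{\gamma_4}\leq C(c,\alpha,\varepsilon)\delta^{-\varepsilon}$. Choosing $\varepsilon := 1-\alpha-\beta$, which is positive by hypothesis, absorbs the polylogarithmic growth into the factor $\delta^{-\varepsilon}$ and yields $A\delta^{1-\alpha}\leq C_1(c,\alpha,\beta)\delta^{\beta}$.

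For the second term, set $t := \log\delta^{-1}$, so that $\rho = (4\alpha/(ec))t$. For $t$ larger than some threshold $t_0=t_0(c,\alpha)$ one has $\log(1+\rho)\leq 2\log t$, whence
\[
\frac{ec}{4}\cdot\frac{\rho}{\log(1+\rho)} \geq \frac{\alpha\,t}{2\log t}.
\]
Substituting into the explicit expression for $B$ given by \eqref{eq:detailed} produces $B\leq C'(c,\alpha,\nu)(\log t)^{\nu}/t^{\nu}$. Since $\nu>\mu$, for $t$ sufficiently large one has $(\log t)^{\nu}\leq t^{\,\nu-\mu}$, and hence $B\leq C_2(c,\alpha,\nu,\mu)\,t^{-\mu} = C_2(\log\delta^{-1})^{-\mu}$.

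The only delicate bookkeeping concerns the range of $\delta$ bounded away from $0$ (equivalently, $t$ below the thresholds used above), where the asymptotic estimates do not apply verbatim. In that regime both $\delta^{\beta}$ and $(\log\delta^{-1})^{-\mu}$ are themselves bounded below by positive constants depending only on $c,\alpha,\beta,\mu$, so by enlarging $C_1$ and $C_2$ the inequality is made to hold uniformly in $\delta\in(0,1)$. I do not anticipate any genuine obstacle beyond keeping track of these constants; the substantive content is already contained in Theorem~\ref{T:detailed}.
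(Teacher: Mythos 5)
Your proposal is correct and follows essentially the same route as the paper: reduce to $M=1$ by the scaling $f\mapsto f/M$, $w\mapsto w/M$, apply Theorem~\ref{T:detailed}, bound the exponential factor by $e^{\pi^2 c/(2e)}$ via $\log(1+\rho)\leq\rho$, absorb $(1+\rho)^{\gamma_4}$ into $\delta^{-(1-\alpha-\beta)}$, and trade the $(\cdot)^{-\nu}$ decay for $(\log\delta^{-1})^{-\mu}$ using $\mu<\nu$. The only difference is that you spell out the asymptotic estimates and the small-$t$ regime that the paper leaves implicit in the constants $m_1$ and $m_2$.
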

  \begin{proof}
       It is sufficient to prove 
       Corollary \ref{C:main}  for the case $M=1$.   The case $M \neq 1$ is reduced to $M=1$ by scaling $f \rightarrow \tilde{f} = f/M$ and 
       $w \rightarrow \tilde{w} = w/M$.
    Therefore, it remains to show that, under the assumptions of   Theorem \ref{T:detailed}, 
    the following estimate holds   for any    $\beta \in (0, 1-\alpha)$ and any $\mu \in (0,\nu)$:
\begin{equation}\label{eq:inter}
	\|f - \calF_{n^*,c}^{-1} [w]\|_{\calL^2([-1,1])} \leq    C_1  \delta^{\beta} +  C_2  \|f\|_{\calH^\nu([-1,1])}   \left( \log \delta^{-1}\right)^{-\mu}  ,
\end{equation}
where  $C_1 = C_1(c,\alpha,\beta)>0$ and $C_2 = C_2(c,\alpha,\nu,\mu)>0$.

Under our assumptions, we have that:
\begin{align*}
\rho  =  \dfrac{4}{ec}  \alpha  \log  (\delta^{-1})  >0;
 \qquad
  		\dfrac{\pi^2c  \log(1+\rho) }{2 e \rho}  \leq \dfrac{\pi^2c}{ 2e};
    \end{align*} 
    and, for some positive constants  $m_1 = m_1 (c, \alpha,\beta, \gamma_4)$
    and  $m_2  = m_2( c, \alpha, \nu, \mu) $,
  \begin{align*} 
       (1+\rho)^{\gamma_4} \delta^{1-\alpha }  &\leq m_1  \delta^{\beta};
    \\
    \left( 2+   \dfrac{ec}{4}\cdot \dfrac{\rho }{\log(1+\rho)}  \right)^{-\nu} &\leq 
     m_2  \left( \log \delta^{-1}\right)^{-\mu}.
  \end{align*}
   Applying these  estimates in  \eqref{eq:detailed}, we derive   \eqref{eq:inter} 
    with 
    \[
     C_1 =  \gamma_1  c^{-\gamma_2} (1+c)^{\gamma_3} e^{\pi^2c /(2e)}m_1, 
     \qquad C_2 = C (1+c^2)^{\nu/2}m_2.
     \] 
    This completes the proof of Corollary \ref{C:main}. 
    \end{proof}



\section{Multidimensional reconstruction}\label{S:multi}

In this section, we  
 prove Theorems \ref{T1} and    \ref{T:multi}.



\subsection{Proof of Theorem \ref{T1}}\label{S:T1}
 Let $\R[v]$ be  the Radon transform of $v$; see formula \eqref{def:R}.
Since   $\supp v \subset  B_\sigma$, we have that
\begin{equation}\label{Rv=0}
\mathcal{R}  [v] (t,\theta) = 0  \text{ for $|t|>\sigma$.}
 \end{equation}
 Therefore, we only need to integrate over $t \in [-\sigma,\sigma]$    in \eqref{eq:projection}. Then, using the change of variables $s= r x$, $t= \sigma y$ and recalling  $c = r\sigma$, we get that  
 \begin{equation}\label{eq:grtheta}
 	g_{r,\theta} (x) =  \left(\dfrac{2\pi}{\sigma}\right)^d \hat{v} (r x \theta )   = 
 	\dfrac{1}{\sigma^{d-1}}  
 	\int_{-1}^1  	  e^{i  c x y }   \mathcal{R}  [v] (\sigma y,\theta)  dy, \qquad x \in [-1,1]. 
 \end{equation}
 Using  \eqref{Rv=0}, \eqref{eq:grtheta} and
recalling the definitions of   $\calF_c$   and  $f_{r, \sigma}$, 
we obtain that 
 \begin{equation}\label{eq:eq:last}
 		\mathcal{R}  [v] (\sigma y,\theta) = \sigma^{d-1} \calF_c^{-1} [g_{r,\theta}] (y) =  
 		    \sigma^{d-1} f_{r, \sigma}(y, \theta). 
 \end{equation}
 Let
 \begin{equation}\label{def:vsigma} 
 	   v_{\sigma}(q) := v(\sigma q), \qquad q \in \Reals^d.
 \end{equation}
Using   \eqref{def:R} and the change of variables $q = \sigma  q'$, we find that
 \[ 
 \mathcal{R}  [v] (\sigma y,\theta) = \int_{q\in \Reals^d \st q \theta =\sigma y } v(q) dq  = 
     \sigma^{d-1}\int_{q'\in \Reals^d \st q' \theta =y } v(\sigma  q') dq' 
 =
    \sigma^{d-1} \mathcal{R}  [v_{\sigma}](y, \theta).
 \]
 Thus,  also using \eqref{eq:eq:last}, we get
 \begin{equation}\label{eq:vf_sigma}
 \mathcal{R}  [v_{\sigma}]  = f_{r, \sigma}.
 \end{equation}
   Applying the inverse Radon transform   and formula \eqref{def:vsigma} completes the proof.
 
 \subsection{Proof of Theorem \ref{T:multi}}\label{S:multi}
 We will   repeatedly use the following bounds for the Sobolev norm with respect to the argument scaling. 
 \begin{Lemma}\label{L:Snorm}
 		Let $\mathfrak{v} \in \calH^{\eta}(\Reals^d)$ for some $\eta \in \Reals$.
 		  Then,  for any $\sigma>0$,
 		\begin{align*}
  	    \dfrac{\sigma^{\eta - d/2}}{ (1+\sigma)^{\eta}} \|\mathfrak{v} \|_{\calH^{\eta}(\Reals^d)}
  	 \leq \|\mathfrak{v}_{\sigma}\|_{\calH^{\eta}(\Reals^d)} \leq  
  	   \dfrac{(1+\sigma)^{\eta}}{\sigma^{d/2}} \|\mathfrak{v}\|_{\calH^{\eta}(\Reals^d)},
  	   \qquad \text{for $\eta \geq 0$}, 
  	   \\
  	   \dfrac{(1+\sigma)^{\eta}}{\sigma^{d/2}}
  	   \|\mathfrak{v} \|_{\calH^{\eta}(\Reals^d)}
  	 \leq \|\mathfrak{v}_{\sigma}\|_{\calH^{\eta}(\Reals^d)} \leq  
  	     \dfrac{\sigma^{\eta - d/2}}{ (1+\sigma)^{\eta}}   \|\mathfrak{v}\|_{\calH^{\eta}(\Reals^d)},
  	    \qquad \text{for $\eta \leq 0$},
  \end{align*}
  where $\mathfrak{v}_\sigma$ is defined  by $\mathfrak{v}_\sigma (q): = \mathfrak{v}(\sigma q)$, $q \in \Reals^d$.
 \end{Lemma}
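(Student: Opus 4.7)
The plan is to reduce everything to a computation in the Fourier domain, where the scaling has a simple form. Using the definition \eqref{eq:Fourier} and the substitution $q' = \sigma q$, I will first verify the Fourier dilation identity
\begin{equation*}
  \widehat{\mathfrak{v}_\sigma}(p) = \sigma^{-d}\,\hat{\mathfrak{v}}(p/\sigma), \qquad p\in \Reals^d.
\end{equation*}
Plugging this into the Sobolev norm and performing the further change of variables $p \mapsto \sigma p$ in the outer integral gives the exact identity
\begin{equation*}
   \|\mathfrak{v}_\sigma\|_{\calH^\eta(\Reals^d)}^2
   = \sigma^{-d}\int_{\Reals^d}(1+\sigma^2 p^2)^{\eta}\,|\hat{\mathfrak{v}}(p)|^2\,dp.
\end{equation*}
This reduces the problem to pointwise comparison of the weight $(1+\sigma^2 p^2)^{\eta}$ with $(1+p^2)^{\eta}$ for arbitrary $\sigma>0$.

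Next, I would establish the elementary two-sided bound
\begin{equation*}
   \left(\frac{\sigma}{1+\sigma}\right)^{2}(1+p^2) \;\le\; 1+\sigma^2 p^2 \;\le\; (1+\sigma)^{2}(1+p^2),
\end{equation*}
valid for all $\sigma>0$ and $p\in \Reals^d$. The right-hand inequality follows by expanding and using $1+\sigma^2 \le (1+\sigma)^2$ and $\sigma^2 \le (1+\sigma)^2$; the left-hand inequality is equivalent to $(1+\sigma)^2 + \sigma^2(1+\sigma)^2 p^2 \ge \sigma^2 + \sigma^2 p^2$, which is immediate. Raising both sides to the power $\eta$ preserves the direction of the inequalities when $\eta \ge 0$ and reverses them when $\eta \le 0$.

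Finally, I insert these pointwise bounds back into the integral identity and factor out the constants. Taking the square root and collecting $\sigma^{-d/2}$ with the weight factor $(\sigma/(1+\sigma))^{\eta}$ or $(1+\sigma)^{\eta}$ yields precisely the four bounds stated in the lemma, in both the $\eta \ge 0$ and $\eta \le 0$ cases. I do not anticipate any genuine obstacle; the only points requiring some care are (i) matching the Jacobian factor $\sigma^{-d}$ in front of the integral with the factor $\sigma^{-d/2}$ that appears in the final norm estimate, and (ii) tracking the correct direction of the inequalities when $\eta$ is negative, so that the $(\sigma/(1+\sigma))^\eta$ factor appears on the upper side rather than the lower. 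Both are routine bookkeeping.
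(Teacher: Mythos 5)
Your proposal is correct and follows essentially the same route as the paper: pass to the Fourier domain via $\widehat{\mathfrak{v}_\sigma}(p)=\sigma^{-d}\hat{\mathfrak{v}}(p/\sigma)$, rescale the integration variable to obtain $\|\mathfrak{v}_\sigma\|_{\calH^\eta}^2=\sigma^{-d}\int(1+\sigma^2p^2)^\eta|\hat{\mathfrak{v}}(p)|^2\,dp$, and compare the weights pointwise. The only cosmetic difference is that the paper first bounds the weight ratio by $\min\{1,\sigma^{2\eta}\}$ and $\max\{1,\sigma^{2\eta}\}$ and then relaxes these to the $(1+\sigma)$-form, whereas you prove the $(1+\sigma)$-form pointwise bound directly; both yield the stated constants.
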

 \begin{proof}
 	  Recall that
  	 \begin{align*}
  	 	  \|\mathfrak{v}\|_{\calH^{\eta}(\Reals^d)}^2 &= \int_{\Reals^{d}} (1+  p^2)^{\eta} |\hat{\mathfrak{v}}( p)|^2 dp,\\
 	\|\mathfrak{v}_\sigma \|_{\calH^{\eta}(\Reals^d)}^2 &=  \int_{\Reals^{d}} (1+p^2)^{\eta} |\hat{\mathfrak{v}}_\sigma(p)|^2 dp 
 	\\
 	&=   \sigma^{-2d} \int_{\Reals^{d}} (1+p^2)^{\eta}  |\hat{ \mathfrak{v}}(  p/\sigma)|^2 dp 
 	\\
 	 &=  \sigma^{-d} \int_{\Reals^{d}} (1+ (\sigma p')^2)^{\eta} |\hat{\mathfrak{v}}( p')|^2 dp'. 
 	\end{align*}
 	  Bounding 
 	  	 \begin{align*}
 	 	 \min \{1, \sigma^{2\eta}\}
 	 	\leq
 	 	\dfrac{ (1+ (\sigma p')^2)^{\eta}}{  (1+ (p')^2)^{\eta}}
 	 	\leq  \max \{1, \sigma^{2\eta}\},
 	 \end{align*}
 	  we derive  that 
 	  \[
 	  	  \min\{1,\sigma^{2\eta}\}
 	  	\|\mathfrak{v}\|_{\calH^{\eta}(\Reals^d)}^2 
 	  	\leq \sigma^d \|\mathfrak{v}_\sigma \|_{\calH^{\eta}(\Reals^d)}^2 \leq  \max \{1,\sigma^{2\eta}\}
 	  	  \|\mathfrak{v}\|_{\calH^{\eta}(\Reals^d)}^2. 
 	  \]
 To complete the proof, it remains to observe that
 \begin{align*}
 	\max \{1, \sigma^{2\eta}\} &\leq  
 	\begin{cases}
 		(1+\sigma)^{2\eta}, &\text{if } \eta \geq 0,\\
 		\dfrac{\sigma^{2\eta}}{(1+\sigma)^{2\eta}}, &\text{if } \eta \leq 0,
 	\end{cases}
\\
 	\min \{1, \sigma^{2\eta}\} &\geq  
 	\begin{cases}
 		\dfrac{\sigma^{2\eta}}{(1+\sigma)^{2\eta}} , &\text{if } \eta \geq 0,\\
 		(1+\sigma)^{2\eta} , &\text{if } \eta \leq 0.
 	\end{cases}
 \end{align*}
%
%
%
\end{proof}

 Now we are ready to prove Theorem   \ref{T:multi}.  
 Let $v_{\sigma}$ be defined by \eqref{def:vsigma} and
 \[
 		   v_{\sigma}^{\delta}(q) := v^{\delta}(\sigma q), \qquad q \in \Reals^d.
 \]
  Applying Lemma \ref{L:Snorm} with 
 $\mathfrak{v} = v - v^{\delta}$ and $\eta  = - \frac{d-1}{2}$, we find that
%
%
 \begin{equation}\label{eq1}
 	\|v- v^{\delta}\|_{\calH^{-(d-1)/2}(\Reals^d)} \leq (1+\sigma)^{(d-1)/2} \sigma^{d/2}
 	\|v_{\sigma}-v^{\delta}_{\sigma}\|_{\calH^{-(d-1)/2}(\Reals^d)}.
 \end{equation}
 
  Using  the formulas for  $ f_{r, \sigma}$ and  $ u_{r,\sigma}$
  of Theorems \ref{T1} and  \ref{T:multi},
   we find that
%
\[
	 v_{\sigma} - v_\sigma^\delta  = \mathcal{R}^{-1} [f_{r, \sigma} - u_{r,\sigma}].
\]
Note also that  both $f_{r, \sigma}$ and $ u_{r,\sigma}$ are supported in $[-1,1]\times \mathbb{S}^{d-1}$.  Applying Lemma \ref{L:H-H} 
for $u = f_{r, \sigma} - u_{r, \sigma}$, we get that   
\begin{equation}\label{eq2}
	\|v_{\sigma}-v^{\delta}_{\sigma}\|_{\calH^{-(d-1)/2}(\Reals^d)} \leq 
	\dfrac{1}{a} \|f_{r, \sigma} - u_{r, \sigma}\|_{\calL^2(\Reals\times \mathbb{S}^{d-1})},
\end{equation}
where $a = a(-\tfrac{d-1}{2},d)$ is the constant from \eqref{eq:H-H}.

Observe that
 \begin{equation}\label{eq25}
 	 \|f_{r, \sigma} - u_{r, \sigma}\|_{\calL^2(\Reals\times \mathbb{S}^{d-1})}^2 
 	 = \int_{\mathbb{S}^{d-1}} \|f_{r, \sigma}(\cdot, \theta) - u_{r,\sigma}(\cdot, \theta)\|_{\calL^2([-1,1])}^2 d \theta.
 \end{equation}
 Applying  Corollary~\ref{C:main}  with functions
  $f = f_{r, \sigma}(\cdot, \theta)$ and $w =w_{r,\theta}$, we obtain that, for  any  $\mu \in (0,\nu+\tfrac{d-1}{2})$ and almost all $\theta \in \mathbb{S}^{d-1}$,
 \begin{equation}\label{eq26}
 	\begin{aligned}
	&\|f_{r, \sigma}(\cdot, \theta) - u_{r,\sigma}(\cdot, \theta)\|_{\calL^2([-1,1])} \leq    C_1  M(\theta) \delta^{\beta} +  C_2     H(\theta) \left( \log \delta^{-1}\right)^{-\mu},
 	\\&M(\theta):=   \dfrac{1}{\delta} \|g_{r, \theta} - w_{r,\theta}\|_{\calL^2([-1,1])}, 
  	\\
 	&H(\theta):=  \|f_{r, \sigma}(\cdot, \theta)\|_{\calH^{\nu+(d-1)/2}([-1,1])},
 \end{aligned}
 \end{equation}
 where $f_{r,\sigma}$, $g_{r, \theta}$ and  $ w_{r,\theta}$
 are defined  in Theorems \ref{T1} and  \ref{T:multi},
 $C_1 $ and $C_2$
 are the constants of Corollary~\ref{C:main} with $\nu+ \frac{d-1}{2}$ in place of $\nu$.
%
 Here, the assumption of Corollary~\ref{C:main} that
 \[
 	\|\calF_c[f_{r, \sigma}(\cdot, \theta)] -w_{r,\theta}\|_{\calL^2([-1,1])} \leq  \delta M(\theta)
 \]
 is fulfilled automatically, since   $f_{r, \sigma}(\cdot, \theta) \equiv \calF_c^{-1}[g_{r, \sigma}]$ on $[-1,1]$ by definition.

  In fact, the functions $M$, $H$   belong to $\calL^2(\mathbb{S}^{d-1})$; see formulas  \eqref{eq4} and \eqref{eq45} below.
 Combining formulas \eqref{eq25}, \eqref{eq26}  and the Cauchy--Schwarz inequality
 \[
 \int_{\mathbb{S}^{d-1}} H(\theta) M(\theta) d \theta \leq 
 \|M\|_{\calL^2(\mathbb{S}^{d-1})} 
 \|H\|_{\calL^2(\mathbb{S}^{d-1})}, 
 \]
  we get that
  \begin{equation}\label{eq3}
 \begin{aligned}
 	 \|f_{r, \sigma} - u_{r, \sigma}\|_{\calL^2(\Reals\times \mathbb{S}^{d-1})}^2 
 	 &\leq \int_{\mathbb{S}^{d-1}} \left(C_1  M(\theta) \delta^{\beta} +  C_2    H(\theta)  \left( \log \delta^{-1}\right)^{-\mu}\right)^2 d \theta
 	 \\
 	 &\leq  \left( C_1   \|M\|_{\calL^2(\mathbb{S}^{d-1})}   \delta^{\beta}
 	 +   C_2  
 	  \|H\|_{\calL^2(\mathbb{S}^{d-1})} \left( \log \delta^{-1}\right)^{-\mu}  \right)^2.
 \end{aligned}
 \end{equation}
 Next, we estimate  
 $ \|M\|_{\calL^2(\mathbb{S}^{d-1})}$ and $\|H\|_{\calL^2(\mathbb{S}^{d-1})}$. 
Since
 $\|w-\hat{v}\|_r \leq \delta N$, we get  
 \begin{equation}\label{eq4}
 \begin{aligned}
 	\|M\|_{\calL^2(\mathbb{S}^{d-1})}^2 &=
      \int_{\mathbb{S}^{d-1}} \dfrac{1}{\delta^2} \|g_{r,\theta} - w_{r,\theta}\|_{\calL^2 ([-1,1])}^2  d\theta 
     \\&=
  \dfrac{1}{r\delta^2 }    \left(\dfrac{2\pi}{\sigma}\right)^{2d} 
 	  \int_{\mathbb{S}^{d-1}}  \int_{-r}^r |w(s\theta) - \hat{v}(s\theta) |^2 ds \, d \theta.
    \\
 &=    \dfrac{2}{r\delta^2 }  \left(\dfrac{2\pi}{\sigma}\right)^{2d}    \|w - \hat{v}\|_r^2 \leq  \dfrac{2}{r }  \left(\dfrac{2\pi}{\sigma}\right)^{2d}  N^2.
 \end{aligned}
 \end{equation}
%
In addition,  using \eqref{eq:vf_sigma}, we get
\begin{equation}\label{eq45}
\|H\|_{\calL^2(\mathbb{S}^{d-1})} =   \|f_{r,\sigma}\|_{\calH^{\nu+ (d-1)/2}(\Reals \times \mathbb{S}^{d-1} ) }
= \|\mathcal{R}[v_\sigma]\|_{\calH^{\nu+ (d-1)/2}(\Reals \times \mathbb{S}^{d-1} ) }.
\end{equation}
Using formula \eqref{eq45},  the right inequality of \eqref{eq:H-H},
 and applying Lemma \ref{L:Snorm}
 with   $\mathfrak{v} = v$ and $\eta  = \nu$,  we obtain that 
 \begin{equation}\label{eq5}
 	\|H\|_{\calL^2(\mathbb{S}^{d-1})}    \leq b \|v_\sigma\|_{\calH^{\nu}(\Reals^d)}
 	\leq b  \dfrac{(1+\sigma)^{\nu}}{\sigma^{d/2}} \|v\|_{\calH^{\nu}(\Reals^d)},
 \end{equation}
 where $b= b(\nu,d)$ is the constant from  \eqref{eq:H-H}.
 
 Combining  \eqref{eq1} -- \eqref{eq5}, we derive the required bound \eqref{eq:multi} with 
 \begin{align*}
 	\kappa_1  :=\frac{\sqrt{2} (2\pi)^d  (1+\sigma)^{(d-1)/2} C_1 }{a \sigma^{\frac{d}{2}} \sqrt{r}},
 	\qquad
 	\kappa_2 := \frac{ b}{a }   (1+\sigma)^{\nu + (d-1)/2} C_2. 
 \end{align*}

\section{Proof of  Lemma \ref{L:delta-mu}}\label{S:detailed}

To prove  Lemma \ref{L:delta-mu}, we need two additional technical results given below.
\begin{Lemma}\label{L:tau}
	For any  $\rho > 0$, the equation  
	 \begin{equation}\label{tau:eq}
	 	\tau \log \tau =   \rho
	 \end{equation}
	 has the unique solution $\tau = \tau(\rho)>1$.  Furthermore,  
	 \begin{equation}\label{rho:ineq}
	 	 1 \leq   \dfrac{\rho}{\log (1+\rho)} \leq \tau(\rho) \leq 1+\rho.
	 \end{equation}
\end{Lemma}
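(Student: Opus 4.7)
The plan is to analyze the function $h(\tau) := \tau \log \tau$ on $(0,\infty)$. Since $h'(\tau) = \log \tau + 1$, we have $h' > 0$ on $(1,\infty)$, while $h(\tau) < 0$ for $\tau \in (0,1)$, $h(1)=0$, and $h(\tau) \to \infty$ as $\tau \to \infty$. So for every $\rho > 0$ there is a unique solution $\tau(\rho) > 1$ to $h(\tau) = \rho$, giving the first assertion.

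Next I would observe that the lower bound $\tau \geq \rho/\log(1+\rho)$ and the upper bound $\tau \leq 1+\rho$ in \eqref{rho:ineq} are in fact equivalent. Indeed, starting from $\tau \leq 1+\rho$ and taking logarithms, one gets $\log\tau \leq \log(1+\rho)$, and multiplying by $\tau > 0$ gives $\rho = \tau\log\tau \leq \tau\log(1+\rho)$, which rearranges to $\tau \geq \rho/\log(1+\rho)$. (The reverse direction is analogous.) So it suffices to establish the single inequality $\tau(\rho) \leq 1+\rho$.

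To prove $\tau \leq 1+\rho$, I would use the monotonicity of $h$ on $[1,\infty)$ and show $h(1+\rho) \geq \rho$, i.e.,
\[
g(\rho) := (1+\rho)\log(1+\rho) - \rho \geq 0 \quad \text{for } \rho \geq 0.
\]
This follows from $g(0) = 0$ and $g'(\rho) = \log(1+\rho) \geq 0$. Combined with the strict monotonicity of $h$ on $[1,\infty)$ and $h(\tau) = \rho \leq h(1+\rho)$, we conclude $\tau \leq 1+\rho$, and via the equivalence above, $\tau \geq \rho/\log(1+\rho)$.

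Finally, the leftmost inequality $1 \leq \rho/\log(1+\rho)$ is just the well-known bound $\log(1+\rho) \leq \rho$ for $\rho > 0$, proved in one line by noting the derivative of $\rho - \log(1+\rho)$ is $\rho/(1+\rho) \geq 0$ and the value at $\rho = 0$ is $0$. No step here is technically hard; the only mildly delicate point is spotting the equivalence between the two bracketing inequalities so that the proof reduces to checking a single convexity-type inequality, rather than attacking both bounds separately.
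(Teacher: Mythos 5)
Your proof is correct and follows essentially the same route as the paper: existence and uniqueness via monotonicity of $\tau\log\tau$ on $[1,\infty)$, then the upper bound $\tau\leq 1+\rho$, from which the lower bound $\tau\geq \rho/\log(1+\rho)$ is deduced by exactly the same log-and-divide manipulation, with $\log(1+\rho)\leq\rho$ handled separately. The only (immaterial) difference is in how $\tau\leq 1+\rho$ is verified: you check $h(1+\rho)\geq\rho$ directly using monotonicity of $h$, whereas the paper applies the decreasing auxiliary function $u_2(\tau)=\tau-\tau\log\tau$ to get $\tau-\rho\leq u_2(1)=1$.
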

\begin{proof}
         Observe that $u_1(\tau) = \tau \log \tau$ is a  strictly increasing continuous function on $[1,+\infty)$,
	$u_1(1) = 0$, and $u_1(\tau) \rightarrow +\infty$ as $\tau \rightarrow +\infty$.  
	Then, by the intermediate value theorem,  equation \eqref{tau:eq} has the 
	 unique solution   $\tau(\rho) \in (0,+\infty)$ for any $\rho>0$.

Next,  note that  
 $u_2(\tau) = \tau  - \tau \log \tau$ is a strictly decreasing function on  $[1,+\infty)$
 since its derivative $u_2'(\tau) = -\log \tau$ is negative for $\tau>1$. Therefore,
 \[\tau(\rho) - \rho = \tau(\rho) - \tau(\rho) \log \tau(\rho) \leq u_2(1) = 1.
\]
Thus,  we proved that $\tau(\rho)  \leq 1 +\rho$.
	Then,  we get $ \log (\tau(\rho) ) \leq \log(1+\rho) $ which implies the  other bound
	\[
	 	\tau(\rho) =  \dfrac{\rho}{ \log (\tau(\rho) )} \geq \dfrac{\rho}{\log (1+ \rho)}.
	 \]
	The remaining inequality   $ \dfrac{\rho}{\log (1+ \rho)}\geq 1$ is equivalent to $e^{\theta}-1 \geq \theta$ with $\theta = \log (1+\rho)$.
\end{proof}

\begin{Lemma}\label{L:ass1}
	Let  $\alpha, \delta \in (0,1)$
	and $\tau $  be  defined according to \eqref{tau:eq} with $\rho = \dfrac{4}{ec}  \alpha  \log  (\delta^{-1})$.
	 Then,   for any $q\geq 0$,  we have 
		\[
	 		e^{ \eta (\log \eta  -    \kappa)} \leq \left(\dfrac{4 \eta}{ c}\right)^{q}   \delta^{-\alpha},
	 	\]
	 	where $\kappa$  is defined according to \eqref{def:kappa} and 
	 	$\eta = \eta(q,\alpha,\delta,c) :=  q+  \tau \dfrac{ec}{4}$.
\end{Lemma}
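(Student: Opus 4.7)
The plan is to reduce the claimed bound, via the definitions of $\kappa$, $\eta$, and $\tau$, to the elementary inequality $\log\xi \le \xi - 1$; no analytic tools beyond this are needed.

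First I would rewrite the exponential on the left. Since $\kappa = \log(ec/4)$, we have $\log\eta - \kappa = \log(4\eta/(ec))$, hence
\[
e^{\eta(\log\eta - \kappa)} = \left(\frac{4\eta}{ec}\right)^{\eta} = \left(\frac{4\eta}{c}\right)^{\eta} e^{-\eta}.
\]
Therefore the inequality to prove is equivalent to
\[
\left(\frac{4\eta}{c}\right)^{\eta - q} e^{-\eta} \le \delta^{-\alpha}.
\]

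Next I would take logarithms and invoke the two key identities $\eta - q = \tau \cdot ec/4$ (by the definition of $\eta$ in the statement) and $\alpha\log(\delta^{-1}) = \rho \cdot ec/4 = \tau\log\tau \cdot ec/4$ (by the definition of $\rho$ and the defining equation \eqref{tau:eq} for $\tau$ from Lemma \ref{L:tau}). After dividing through by $ec/4$, the inequality becomes
\[
\tau\log(4\eta/c) - \frac{4\eta}{ec} \le \tau\log\tau,
\]
or equivalently $\tau\log\!\bigl(4\eta/(ec\tau)\bigr) \le 4\eta/(ec) - \tau$.

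Finally I would substitute $\xi := 4\eta/(ec\tau)$. Since $\eta = q + \tau\cdot ec/4$ with $q \ge 0$, we have $\xi \ge 1$, and the target inequality collapses to $\tau\log\xi \le \tau(\xi - 1)$, i.e.\ $\log\xi \le \xi - 1$, which holds for all $\xi > 0$. There is no real obstacle here: the proof is essentially bookkeeping of exponentials, and the only mildly clever moment is spotting that the substitution $\xi = 4\eta/(ec\tau)$ exposes the elementary logarithmic bound, with the identity $\tau\log\tau = \rho$ doing all the rest of the work.
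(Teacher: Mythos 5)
Your proof is correct and is essentially the paper's own argument in a slightly different packaging: both rest on the identity $\tau\tfrac{ec}{4}\log\tau=\alpha\log(\delta^{-1})$ together with the elementary bound $\log(1+x)\le x$, which is exactly your $\log\xi\le\xi-1$ with $\xi=1+4q/(ec\tau)$. The paper decomposes the exponent $\eta(\log\eta-\kappa)$ directly rather than reducing to an equivalent inequality and substituting, but the content is the same.
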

\begin{proof}
	 First, observe  that 
	 \begin{align*}
	 		\eta (\log \eta -    \kappa) 
	 		&=  (q+ \tau \dfrac{ec}{4}) (\log \eta -    \kappa) 
	 		\\
	 			&=  q (\log \eta -    \kappa)   + \tau \dfrac{ec}{4} (\log (\tau \dfrac{ec}{4})   -   \log (\dfrac{ec}{4}) + \log \eta - \log (\tau \dfrac{ec}{4}))
	 		\\
	 		 &= q (\log  \eta -  \kappa) +   \tau\dfrac{ec}{4} \log \tau +   \tau \dfrac{ec}{4}  (\log  \eta - \log (\tau \dfrac{ec}{4})).
	 \end{align*}
By the definition of $\tau$, we have that  \[
\tau \dfrac{ec}{4} \log \tau =  \alpha \log (\delta^{-1}).\]
Besides,
\[
	 \tau \dfrac{ec}{4} (\log  \eta - \log (\tau \dfrac{ec}{4})  ) =	 \tau \dfrac{ec}{4} \log \left(1 + \dfrac{q }{\tau \tfrac{ec}{4}}\right)  \leq q.
	 \]
Combining the formulas above and recalling the definition of $\kappa$, we derive that 
	 \[
	 	 \eta (\log \eta -   \kappa) \leq 
	 	 q (\log  \eta -  \log (\dfrac{ec}{ 4}))  + \alpha \log (\delta^{-1}) + q =
	 	    q  \log \left(\dfrac{4 \eta}{ c}\right)  +  \alpha \log (\delta^{-1}).
	 \]
	 The required bound follows by exponentiating the  both sides of the last formula.
\end{proof}

%
%

Now, we  are ready to prove Lemma \ref{L:delta-mu}.
First, we combine formulas \eqref{eq:eigenrel} and \eqref{eigenestimate} to get 
\begin{equation}\label{mu_eq}
		|\mu_{n^*,c}|\geq 
		 \sqrt{\dfrac{2\pi}{ c\,A(n^*,c)}}   
			e^{-\tilde n (\log  \tilde{n}-\kappa)},
	\end{equation}
	where  $\tilde{n}   = n^*+\dfrac12$.
Note that   \eqref{eigenestimate} requires $n^*\geq   \max\left\{3, \dfrac {2c}{\pi}\right\}.$
The inequality $n^*\geq 3$ is immediate by the definition of $n^*$. In addition, using that  $\tau>1$ by Lemma \ref{L:tau}, we can estimate 
\[
	n^*\geq 2+ \tau \dfrac{ec}{4} \geq 2+  \dfrac{ec}{4} >  \dfrac{ec}{4}  > \dfrac {2c}{\pi}.
\]
Thus, we justified \eqref{mu_eq}.

Using the  inequalities $1\leq \tau \leq 1+ \rho$ from Lemma \ref{L:tau}, we estimate
\begin{align*}
	n^* \leq 3 + \tau \dfrac{ec}{4} \leq 3(c+1)\tau \leq 3 (c+1) (1+\rho).
\end{align*}
Using the inequality $\tau \geq \dfrac{\rho}{ \log (1+\rho)}$ from Lemma \ref{L:tau}, we also find that
\[
	 e^{(\pi c)^2/4n^*} \leq  e^{ \pi^2 c / (e \tau)  } 
 	 \leq
 	 \exp\left(\dfrac{\pi^2 c \log(1+\rho)}{ e\rho}\right).
\]
Thus, we get that
\begin{equation}\label{mu_eq1}
 \begin{aligned}
	A(n^*,c)&= \nu_1 (n^*)^{\nu_2} \left(\frac{c}{c+1}\right)^{-\nu_3} e^{(\pi c)^2/4n^*}
	\\&\leq \nu_1  3^{\nu_2} (c+1)^{\nu_2 - \nu_3} c^{-\nu_3}(1+\rho)^{\nu_2}\exp\left(\dfrac{\pi^2 c \log(1+\rho)}{ e\rho}\right).
	\end{aligned}
\end{equation}

Similarly as before, using  the  inequalities $1\leq \tau \leq 1+ \rho$ from Lemma \ref{L:tau}, we estimate
\[
	\tilde{n} \leq 3.5 + \tau \dfrac{ec}{4} \leq 3.5 (c+1)(1+\rho).
\]
Then, using Lemma \ref{L:ass1} with $q :=\tilde{n} - \tau \dfrac{ec}{4}$
and observing that $0\leq q\leq 3.5$, we find that
\begin{equation}\label{mu_eq2}
  	e^{\tilde n (\log  \tilde{n}-\kappa)} \leq  \left(\dfrac{4\tilde{n}}{c}\right)^{q}\delta^{-\alpha}
  	\leq  \left(\dfrac{14(c+1)}{c}\right)^{3.5} (1+\rho)^{3.5}\, \delta^{-\alpha}.
  \end{equation}
Substituting the bounds of \eqref{mu_eq1} and of \eqref{mu_eq2} into \eqref{mu_eq}, we derive  estimate \eqref{delta-mu} with 
\[
 \gamma_1 = \sqrt{\dfrac{\nu_1 3^{\nu_2} } {2 \pi}} 14^{3.5},\qquad  \gamma_2 =  \dfrac{\nu_3}{2} +3,
 \qquad \gamma_3 = \dfrac{\nu_2-\nu_3}{2} + 3.5, \qquad \gamma_4 = \dfrac{\nu_2}{2} + 3.5.
\]
Note that if $\gamma_3 \leq  0$ then we can replace it  with zero,
since   $(1+c)^{\gamma_3} \leq 1$  in this case.
This completes the proof of Lemma \ref{L:delta-mu}.

\end{document}